\documentclass[pdflatex,sn-mathphys-num]{sn-jnl}

\usepackage{graphicx}%
\usepackage{multirow}%
\usepackage{amsmath,amssymb,amsfonts}%
\usepackage{amsthm}%
\usepackage{mathrsfs}%
\usepackage[title]{appendix}%
\usepackage{xcolor}%
\usepackage{textcomp}%
\usepackage{manyfoot}%
\usepackage{booktabs}%
\usepackage{algorithm}%
\usepackage{algorithmicx}%
\usepackage{algpseudocode}%
\usepackage{listings}%
\usepackage{subcaption}
\usepackage{multirow}
\usepackage{overpic}

\theoremstyle{thmstyleone}%
\newtheorem{theorem}{Theorem}
%
\newtheorem{lemma}[theorem]{Lemma}

\theoremstyle{thmstyletwo}%
\newtheorem{example}{Example}%

\theoremstyle{thmstylethree}%
\newtheorem{definition}{Definition}%

\raggedbottom

\newcommand{\conc}{\mbox{conc}}
\newcommand{\inte}{\mbox{int}}
\newcommand{\cl}{\mbox{cl}}
\newcommand{\conv}{\mbox{conv}}

\begin{document}

\title{
Tightening convex relaxations of trained neural networks: a unified approach for convex and S-shaped activations
}

\author[1]{\fnm{Pablo} \sur{Carrasco}}\email{pablo.carrasco@pregrado.uoh.cl}

\author*[2]{\fnm{Gonzalo} \sur{Mu\~noz}}\email{gonzalo.m@uchile.cl}

\affil[1]{\orgdiv{Engineering School}, \orgname{Universidad de O'Higgins}, \orgaddress{\city{Rancagua}, \country{Chile}}}

\affil*[2]{\orgdiv{Industrial Engineering Department}, \orgname{Universidad de Chile}, \orgaddress{\city{Santiago}, \country{Chile}}}

\abstract{
The non-convex nature of trained neural networks has created significant obstacles in their incorporation into optimization models. 
In this context,
Anderson et al. (2020) provided a framework to obtain the convex hull of the graph of a piecewise linear convex activation function composed with an affine function; this effectively convexifies activations such as the ReLU together with the affine transformation that precedes it.
In this article, we contribute to this line of work by developing a recursive formula that yields a tight convexification for the composition of an activation with an affine function for a wide scope of activation functions, namely, convex or ``S-shaped". 
Our approach can be used to efficiently compute separating hyperplanes or determine that none exists in various settings, including non-polyhedral cases.
}

\keywords{neural networks, convex envelopes, non-convex optimization}

\maketitle

\section{Introduction}\label{sec:introduction}

In this article, we study the convexification of a set of the form 
\begin{equation}\label{mainset}
    S:=\{(x,y)\in D\times \mathbb{R}\,:\, y = \sigma(w^\top x + b)\}
\end{equation}
when $D\subseteq \mathbb{R}^n$ is a box domain and $\sigma:\mathbb{R}\to \mathbb{R}$ is a non-linear function.
This set appears naturally when embedding a trained neural network in an optimization model: here, $\sigma$ is the so-called \emph{activation function}, and $w^\top x + b$ is an affine transformation that is computed from one network layer to the next.

Embedding trained neural networks---which are highly expressive non-convex functions---in optimization models is a challenging and impactful topic, with applications in adversarial examples generation~\cite{cheng2017maximum,fischetti2018deep,khalil2018combinatorial}, verification and robustness certification~\cite{Tjeng2017Nov,li2022sok,liu2021algorithms,zhao2024bound}, 
network compression~\cite{serra2020lossless,serra2021scaling,elaraby2023oamip}, 
neural surrogate models~\cite{perakis2022optimizing,tong2023walk}, to mention a few.
Also see \cite{huchette2023deep}.
Well-established optimization solvers have recently incorporated modeling capabilities for this embedding (e.g. \cite{gurobi,turner2023pyscipopt}).
In this setting, convexifying a set of the form \eqref{mainset}, for each neuron, yields a convex relaxation.

One way of convexifying $S$ is to consider \(\{(x,y,z)\in D\times \mathbb{R} \times \mathbb{R}\,: \, y = \sigma(z),\, z = w^\top x + b\}\),
and to compute the convex hull of the graph of $\sigma$ in one dimension \cite{liu2021algorithms,schweidtmann2019deterministic,wilhelm2023convex}. 
The main limitation of this approach is that it considers each one-dimensional $\sigma$ separately, which can render weak relaxations \cite{salman2019convex}. 

One path to improve these relaxations was given in \cite{tjandraatmadja2020convex,anderson2020strong}. In these works, the authors presented a method for obtaining the convex hull of $S$ when $D$ is a polyhedron and $\sigma$ is piecewise linear convex. This includes the popular ReLU activation function $\sigma(z) = \max\{0,z\}$.
The convex hull, in this case, is polyhedral and may have exponentially many facets. One of the key contributions in \cite{tjandraatmadja2020convex,anderson2020strong} is that even if there are many facets, an efficient---i.e., polynomial time---separation exists. The authors also show extensive computational experiments advocating for the importance of tightly convexifying $S$.
We note that \cite{tawarmalani2013explicit} provides an alternative method to convexify $S$ for convex $\sigma$.

\paragraph{Contribution.}
In this paper, we follow the line of work of \cite{tjandraatmadja2020convex,anderson2020strong,tawarmalani2013explicit} and show that a tight and efficient convexification of $S$ is also achievable for a large class of $\sigma$ functions when $D$ is a box. 
Through the lens of function envelopes, we develop a recursive formula that yields separating hyperplanes for $\conv(S)$ using at most $n$ one-dimensional convexifications.
The $\sigma$ functions we handle are convex or ``S-shaped'' (defined below); while \cite{tawarmalani2013explicit} also produces a formula for the case of convex $\sigma$, to the best of our knowledge, no prior work extends beyond this case. Our recursive description offers, from a \emph{unified} point of view, a novel perspective on the convex case and significantly expands the reach of this type of approach to other activations.

\paragraph{Notation.}
We denote by $\mathbb{R}^n_{+}$ the set of vectors where \emph{all} their entries are \emph{strictly positive}.
We denote as $e_i\in \mathbb{R}^n$ the $i$-th canonical vector and $\mathbf{1}$ the vector of all ones. For $x\in \mathbb{R}^n$, we denote $x_{-i}$ the $(n-1)$-dimensional vector given by $x_{-i} = (x_1,\ldots, x_{i-1}, x_{i+1}, \ldots, x_n)$. For a function $f:[0,1]^n \to \mathbb{R}$ we denote $f_{-i}:[0,1]^{n-1}\to \mathbb{R}$ the function obtained by restricting $f$ to have the $i$-th input to $1$. To avoid ambiguity, we always write the argument of $f_{-i}$ as $x_{-i}$, thus 
\(f_{-i}(x_{-i}) = f(x_1,\ldots, x_{i-1},1,x_{i+1},\ldots, x_n).\)

For a set $S\subseteq \mathbb{R}^n$, we denote $\inte(S)$, $\cl(S)$ and $\conv(S)$ its interior, closure, and convex hull, respectively.
For a function $f$, we denote $\conc(f,S)$ its concave envelope over $S$. This is the smallest concave function that upper bounds $f$ on the domain $S$. Similarly, we denote $\conv(f,S)$ its convex envelope over $S$.
Finally, and slightly abusing notation, if $S_1\subseteq S_2$, we say $\conc(f,S_1) = \conc(f,S_2)$ if the concave envelopes coincide on the smallest set, i.e., on $S_1$.

\section{Literature review} \label{sec:litreview}

\paragraph{Approximation of trained neural networks.}
Multiple articles have leveraged the convex hull of $\{(z,y)\, :\, y = \sigma(z)\}$  \cite{schweidtmann2019deterministic,wilhelm2023convex,ehlers2017formal}. However, this can yield weak relaxations \cite{salman2019convex}. 
One research line has improved this by convexifying multiple activations of a single layer simultaneously. 
See, e.g., \cite{singh2019beyond,roth2021primer,muller2022prima,ferrari2022complete,zhang2022general,tang2023boosting,ma2024relu}.
Roughly speaking, they consider groups of constraints of the form $y = \sigma(z)$ and, starting from polyhedral constraints on the $z$ variables, they construct constraints that involve groups of $y$ variables. 
In these approaches, obtaining convex hulls is usually intractable, and thus, the authors rely on tractable polyhedra for the $z$ variables and group a small number of activations. 

A different approach was taken in \cite{tjandraatmadja2020convex,anderson2020strong}, where the authors consider the set $S$ defined in \eqref{mainset} when $D$ is a polytope and $\sigma$ is piecewise linear convex. Specifically, they construct an \emph{ideal formulation} for $S$, which yields the convex hull of $S$.
Related strategies have been proposed in \cite{kronqvist2021steps,tsay2021partition,kronqvist2022psplit}.

Other approaches that handle non-convexities of a neural network include using convex \emph{restrictions} \cite{bastani2016measuring}, piecewise-linear approximations \cite{sildir2022mixed}, piecewise-linear \emph{relaxations}  \cite{benussi2022individual}, Lagrangian relaxations \cite{dvijotham2018dual}, semidefinite relaxations \cite{raghunathan2018semidefinite}, and convexifications of the softmax \cite{wei2023convex}. Also see \cite{salman2019convex}.

\paragraph{Convex and concave envelopes.}
In \cite{tawarmalani2013explicit}, the authors study envelopes of various functions over subsets of a hyper-rectangle. In particular, they 
provide a closed-form formula for $\conc(f,D)$ when $f$ is convex and $D$ is a box; this can handle the case of convex $\sigma$. 
Beyond the convex case, we briefly discuss articles that consider non-polyhedral envelopes in arbitrary dimensions (which is the case for S-shaped $\sigma$).
In \cite{tawarmalani2001semidefinite}, the authors obtain the convex envelope for functions that are concave on one variable and convex on the rest. 
In \cite{jach2008convex}, the authors show how to evaluate $\conc(f,R)$ when $f$ is an $(n-1)$-convex function and $R$ is a rectangular domain. 
In~\cite{khajavirad2012convex,khajavirad2013convex}, the authors formulate the convex envelope of a lower semi-continuous function over a compact set as a convex optimization problem. 
In \cite{barrera2022convex}, the authors compute the envelope of ray-concave functions.

To the best of our knowledge, most of these approaches do not apply to functions of the type $\sigma(w^\top x + b)$, when $\sigma$ is neither concave nor convex.
The only exception is the formulation considered in \cite{khajavirad2012convex,khajavirad2013convex}, which is very general. However, it is unclear to us how to exploit this formulation in order to obtain a simple expression of the envelope.

\section{Main result and discussion}\label{sec:mainresults}
\subsection{Definitions and reductions}
We aim to describe $\conv(S)$, where $S$ is defined in \eqref{mainset} with $D$ a bounded box domain and $\sigma$ continuous. 
Let $f(x) = \sigma(w^\top x + b)$.
In this setting,
\begin{equation}\label{eq:hulltoenvelopes}
    \conv\left(S\right)\, =\, \{(x,y)\in D\times \mathbb{R}\,:\, \conv(f,D) \leq y \leq \conc(f,D)\},
\end{equation}
i.e., computing the convex and concave envelopes of $f$ suffices.

\begin{definition}
    We say a continuous function $\sigma:\mathbb{R}\to \mathbb{R}$ is S-shaped, if there exists $\tilde{z}$ such that $\sigma:(-\infty,\tilde{z}]\to \mathbb{R}$ is convex and $\sigma:[\tilde{z},\infty)\to \mathbb{R}$ is concave.
\end{definition}

The \emph{differentiable} S-shaped functions are commonly known as sigmoid functions.
In our case, we do not assume differentiability.
We also note that in deep learning, the term \emph{sigmoid} is typically reserved for the logistic function $1/(1+e^{-x})$.
To unify our discussion, we define the following property.

\begin{definition}\label{def:stfe}
We say a continuous function $\sigma:\mathbb{R} \to \mathbb{R}$ satisfies the \emph{secant-then-function envelope} (STFE) property if  
\begin{equation} \label{eq:stfe}
\forall [\ell,u]\subseteq \mathbb{R},\, \exists \hat{z}\in [\ell, u],\, \conc(\sigma,[\ell,u])(z) = \left\{ \begin{array}{ll}
     \sigma(\ell) + \frac{\sigma(\hat{z}) - \sigma(\ell)}{\hat{z} - \ell} (z-\ell) & \text{if } z\in [\ell, \hat{z}) \\
     \sigma(z)& \text{if } z\in [ \hat{z}, u]
\end{array}\right.
\end{equation}

For a fixed $[\ell,u]$, we call the \emph{smallest} point $\hat z$ satisfying \eqref{eq:stfe} the \emph{tie point} of $\sigma$ over $[\ell,u]$.
\end{definition}

While multiple $\hat{z}$ may satisfy \eqref{eq:stfe}, the tie point is uniquely defined.
It is not hard to see that if $\sigma$ is either convex, concave, or S-shaped, then the STFE property is satisfied. In the convex and concave case, it is direct. For the differentiable S-shaped case, see \cite{wilhelm2023convex,maranas1995finding,liberti2003convex}. We have not found a proof for the non-differentiable case, but using results from \cite{tawarmalani2002convex}, it follows.\\

To state our main technical result, we simplify the setting as follows:
\begin{enumerate}
    \item We can assume $w_i > 0$, for all $i=1,\ldots, n$, by reducing the dimension when $w_i = 0$ and changing variables $x_i \to -x_i$ when $w_i < 0$.
    \item Additionally, we can rescale and shift the variables to obtain $x\in [0,1]^n$. This changes $w$ and $b$ but does not change the signs of any $w_i$.
\end{enumerate}
It is easy to see that computing envelopes in this modified setting is equivalent. 

\subsection{Main result: concave envelopes}
Our main result is a recursive formula for the \emph{concave} envelope of $f$ defined above, after the previous reductions, when $\sigma$ satisfies the STFE property.
For the construction, we define the following subdivision of $[0,1]^n$.

\begin{definition}\label{def:regions}
 Given $w\in\mathbb{R}^n_{+} $, $b\in \mathbb{R}$, and $\hat{z} \in \mathbb{R}$, we define
     \begin{align*}
    R_f &:= \{ x\in [0,1]^n\, :\, w^\top x + b \geq \hat{z} \} \\
    R_l &:= \{ x\in [0,1]^n\, :\, w^\top x + b < \hat{z},\, w^\top x + b\|x\|_\infty \geq \hat{z}\|x\|_\infty\} \\
    R_i &:= \{ x\in [0,1]^n\, :\, w^\top x + b\|x\|_\infty < \hat{z}\|x\|_\infty, \\
    &\qquad \qquad \qquad \quad  x_i > x_j\quad  \forall j <i,\, x_i \geq x_j\quad  \forall j \geq i\} && i=1,\ldots, n
\end{align*}
\end{definition}
Some regions may be empty or low-dimensional, and they partition $[0,1]^n$. The sub-indices in the names of the regions make reference to the fact that the envelope will be equal to $f$ in $R_f$, linear in $R_l$, and that the $i$-th component of $x$ is the largest in $R_i$. 
Our main result is the following.
\begin{theorem}\label{thm:main}
Consider $w\in\mathbb{R}^n_{+} $, $b\in \mathbb{R}$ and let $f:[0,1]^n \to \mathbb{R}$ be a function of the form
\(f(x) = \sigma(w^\top x + b)\) 
where $\sigma$ satisfies the STFE property. Then,
\[
\conc(f,[0,1]^n)(x) = \left\{ \begin{array}{ll}
     f(x)& \text{if } x\in R_f \\
     \sigma(b) + \frac{\sigma(\hat{z}) - \sigma(b)}{\hat{z} - b} (w^\top x) & \text{if } x\in R_l\\
     \sigma(b) + x_i \conc(f_{-i} - \sigma(b),[0,1]^{n-1})\left(\frac{x_{-i}}{x_i}\right)  & \text{if } x\in R_i\\
\end{array}\right.
\]
where $R_f,R_l$ and $R_i$ are defined using Definition \ref{def:regions} with $\hat{z}$ as the tie point of $\sigma$ in $[b,w^\top \mathbf{1} + b]$.
\end{theorem}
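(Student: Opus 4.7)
The plan is to proceed by induction on $n$. The base case $n=1$ is direct: $R_1$ is empty, $R_f$ and $R_l$ partition $[0,1]$, and the formula is precisely the STFE description of $\conc(\sigma,[b,w+b])$ after the reparameterization $z = wx+b$. For the inductive step, denote by $g$ the piecewise function of the statement and aim to prove (a) $(x,g(x)) \in \conv(\{(y,f(y))\,:\, y \in [0,1]^n\})$ for every $x\in [0,1]^n$, hence $g \leq \conc(f,[0,1]^n)$; (b) $g \geq f$ on $[0,1]^n$; and (c) $g$ is concave on $[0,1]^n$. From (b) and (c) and the minimality of $\conc(f,[0,1]^n)$ among concave majorants of $f$, one has $g \geq \conc(f,[0,1]^n)$, and combining with (a) proves the theorem.

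For (a) I would display explicit convex combinations. On $R_f$ the identity $g = f$ is trivial. On $R_l$, set $\lambda = w^\top x/(\hat z - b)$ and $\tilde x = x/\lambda$: the second defining inequality of $R_l$ is precisely the condition $\tilde x \in [0,1]^n$, and $w^\top \tilde x + b = \hat z$ by construction, so $(x, g(x)) = (1-\lambda)(0, \sigma(b)) + \lambda(\tilde x, \sigma(\hat z))$. On $R_i$ use the induction hypothesis to express $\conc(f_{-i},[0,1]^{n-1})(x_{-i}/x_i)$ as a convex combination of values of $f_{-i}$, lift the constituent points to the face $\{x_i = 1\} \subseteq [0,1]^n$, and combine with the vertex $0$ with weights $x_i$ and $1-x_i$; a direct calculation verifies the resulting combination represents $(x, g(x))$.

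For (b), the cases $R_f$ (equality) and $R_l$ (STFE on $[b,\hat z]$ gives the secant above $\sigma$) are direct. On $R_i$, the induction hypothesis yields $g(x) \geq (1-x_i)\sigma(b) + x_i \sigma(z_2)$ with $z_2 = w^\top x/x_i + b$, and it remains to show this is at least $\sigma((1-x_i)b + x_i z_2) = \sigma(w^\top x + b)$. The definition of $R_i$ forces $z_2 < \hat z$; I would then invoke STFE on the subinterval $[b, z_2]$ and argue that its tie point must be $z_2$ itself, since any interior tie point would satisfy the slope-matching characterization and therefore also be a tie point for $\sigma$ on $[b, w^\top\mathbf{1}+b]$, contradicting the minimality of $\hat z$. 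Hence $\conc(\sigma,[b,z_2])$ coincides with that secant, which dominates $\sigma$ there.

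For (c), concavity inside each region is direct: $\sigma$ is concave on $[\hat z,\infty)$ giving concavity on $R_f$, $g$ is affine on $R_l$, and on $R_i$ the formula is $\sigma(b) + x_i \, h(x_{-i}/x_i)$ with $h = \conc(f_{-i},[0,1]^{n-1}) - \sigma(b)$ concave by induction, so joint concavity follows from the perspective-of-concave construction. The main obstacle is then to glue these concave pieces into a globally concave function across the overlaps $\partial R_f \cap \partial R_l$, $\partial R_l \cap \partial R_i$, and $\partial R_i \cap \partial R_j$. I would first check continuity: the tie-point equation gives the $R_f/R_l$ match; the $R_l/R_i$ match uses that, on the boundary, $x_{-i}/x_i$ lies in the analogue of $R_f$ for the reduced problem on $f_{-i}$ (so that $\conc(f_{-i})(x_{-i}/x_i) = \sigma(\hat z)$), which in turn relies on the monotonicity of the tie point when the left endpoint of the interval is shifted from $b$ to $b+w_i$; and $R_i/R_j$ reduces, by unfolding one step of the recursion, to evaluating the $(n-2)$-dimensional concave envelope at a common point. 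Concavity across each boundary then reduces to a supergradient-compatibility condition, which in every case boils down to the slope relation $(\sigma(\hat z) - \sigma(b))/(\hat z - b) \in \partial\sigma(\hat z)$ encoded by the tie-point definition. This global concavity check is the main obstacle of the proof.
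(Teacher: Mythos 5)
Your skeleton (induction on $n$; show the candidate $g$ is an overestimator, is concave, and that $(x,g(x))$ lies in the convex hull of the graph) matches the paper's strategy, and your parts (a) and (b) are essentially sound: the convex-combination argument on $R_l$ and the lift-from-the-face argument on $R_i$ are the same computations the paper uses to show $g$ is the envelope on each region separately. However, there are two genuine gaps.

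First, in (b) your justification of the one-dimensional fact is wrong as stated. You claim the tie point of $\sigma$ on $[b,z_2]$ with $z_2<\hat z$ ``must be $z_2$ itself'' because an interior tie point would transfer to the large interval by slope matching. This fails: for the ReLU on $[-1,0]$ (with $\hat z=\varepsilon>0$ on a slightly larger interval) the tie point of the subinterval is $-1$, not the right endpoint, because the function is affine there and the tie point is defined as a minimum; more importantly, a tie point of a subinterval need not be a tie point of the larger interval, since the latter additionally requires $\sigma$ to stay below the secant-then-function overestimator on all of $[z_2,w^\top\mathbf{1}+b]$, not just locally. The conclusion you need --- that $\conc(\sigma,[b,z_2])$ is the secant whenever $z_2\le\hat z$ --- is true, but it is exactly the paper's ``decreasing upper bound, case 2,'' whose proof requires a careful analysis of maximal intervals of concavity; it does not follow from the one-line argument you give.

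Second, and decisively, part (c) is not a proof: you correctly identify global concavity as ``the main obstacle'' and then do not resolve it. The claim that gluing across all interfaces ``boils down to the slope relation $(\sigma(\hat z)-\sigma(b))/(\hat z-b)\in\partial\sigma(\hat z)$'' is only plausible for the $R_f$/$R_l$ interface. Across $R_l\cap R_i$ and $R_i\cap R_j$ the one-sided supergradients involve the gradients of the $(n-1)$-dimensional envelopes $\conc(f_{-i},[0,1]^{n-1})$ and $\conc(f_{-j},[0,1]^{n-1})$ composed with the perspective map, and no one-dimensional slope identity controls their compatibility. The paper does not argue by boundary supergradient matching at all: it first proves that $g$ is the concave envelope of $f$ \emph{restricted to each region separately}, then establishes the global supergradient inequality $f(x)\le g(y)+\nabla g(y)^\top(x-y)$ for $y$ in a dense subset of $\inte(R_i)$ and $x$ outside $R_i\cup R_f$ by constructing a direction $\tilde w$ orthogonal to $w$ (along which $f$ is constant) that carries $x$ into $R_i$, and then splitting into cases according to whether $g$ is tight at $y/y_i$ or linear on the ray through $y$. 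This construction, together with the lemma that the region envelopes dominate, is the substantive content of the theorem and is absent from your proposal.
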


Note that in $R_i$ the envelope is defined using a perspective function, which has played an important role in convexification techniques \cite{gunluk2011perspective}.
Theorem \ref{thm:main} states that evaluating $\conc(f,[0,1]^n)(x)$ for a given $x$ is either a direct function evaluation in $R_l$ or $R_f$, or an envelope evaluation in one lower dimension. Since we require a tie point for constructing the regions, in the worst case, we need to compute a one-dimensional envelope of $\sigma$ for $n$ different intervals.
Theorem \ref{thm:main} is also useful for computing gradients of the envelope, almost everywhere, using the chain rule.
This can be useful, e.g., when strengthening an initial relaxation of $\conv(S)$ via separating hyperplanes.

\begin{example}\label{ex:main}
    Consider the sigmoid activation
    \(\sigma(x) = 1/(1+e^{x}),\)
    $w=(10,5)$, and $b=-10$.
    We display the function and envelope in Figure \ref{fig:functionandslices-env}. 
    Note that the concave envelopes that are computed recursively will not be affine in general.  
    Additionally, the tie point used to compute the envelope in both facets is not on the line $w^\top x + b = \hat{z}$, where $\hat z$ is the tie point of $\sigma$ in $[b, w^\top \mathbf{1} + b]$. This is because, in each facet, the argument of $\sigma$ has a different range. However, in both cases, the tie point on the slice is ``to the left'' of the points given by $w^\top x + b = \hat{z}$. This is not a coincidence; we prove below that this always happens and, moreover, we leverage it in our arguments.
\end{example}

\begin{figure}[t]
    \centering
    \begin{subfigure}{0.5\textwidth}
        \centering
                \begin{overpic}[abs,unit=1mm,width=0.75\textwidth]{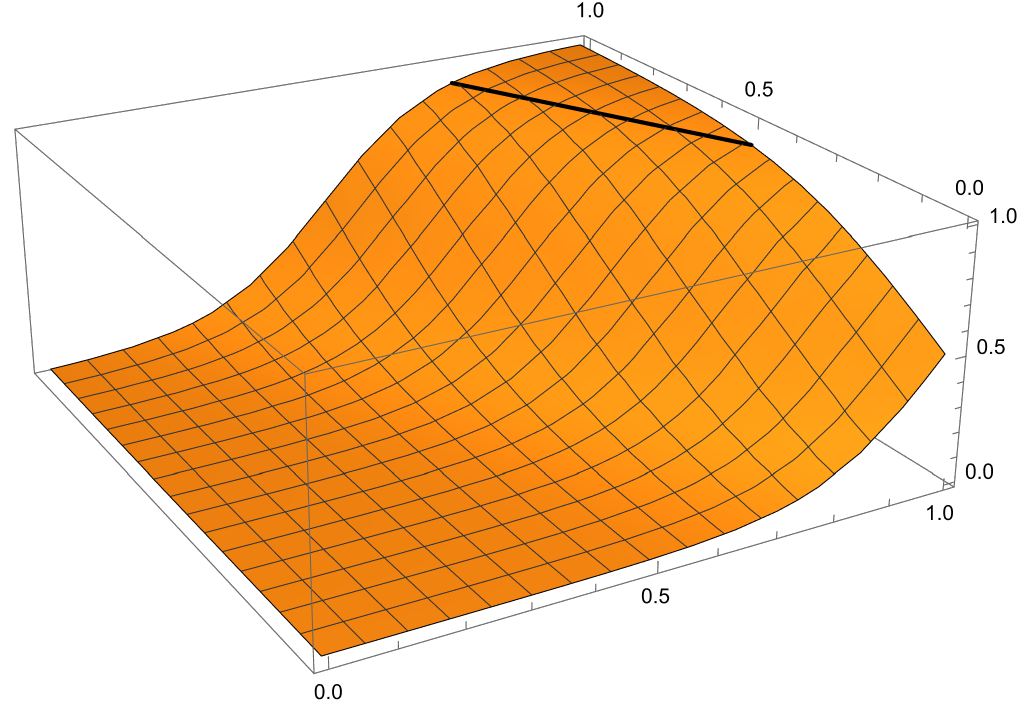}
\put(27,3){\color{black}$x_1$}
\put(3,9){\color{black}$x_2$}
\end{overpic}
\begin{overpic}[abs,unit=1mm,width=0.73\textwidth]{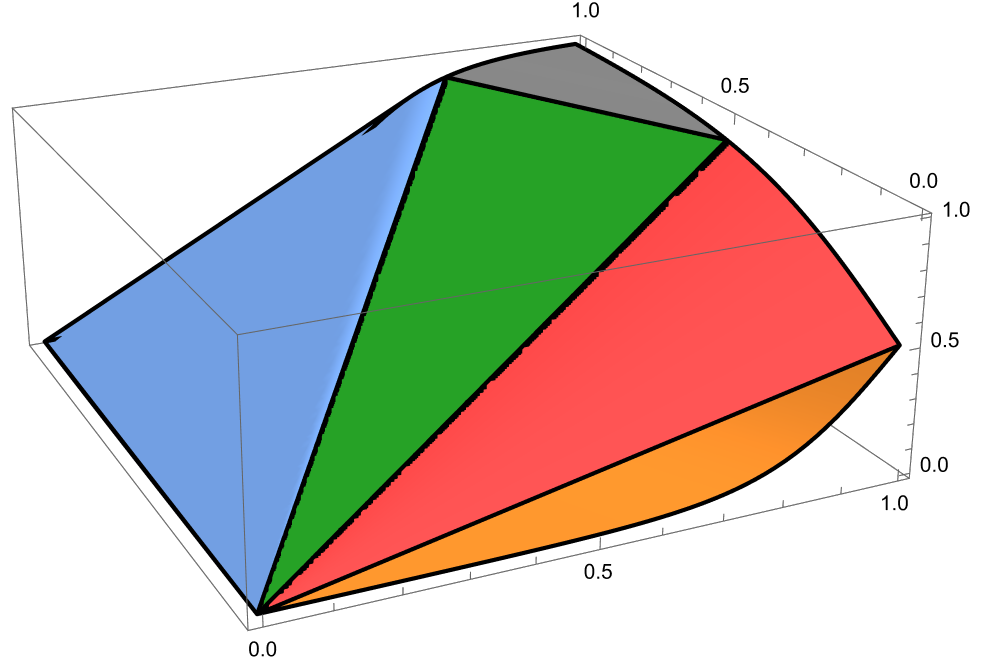}
\put(27,2){\color{black}$x_1$}
\put(1,9){\color{black}$x_2$}
\end{overpic}
        \caption{Top: plot of $f(x)$ (orange) and line $w^\top x + b = \hat{z}$ (black). Bottom: the concave envelope divided into the regions $R_f$ (gray), $R_l$ (green), $R_1$ (red), and $R_2$ (light blue).}
    \end{subfigure}%
    \hspace{.2cm}
    \begin{subfigure}{0.4\textwidth}
        \centering
        \includegraphics[width=0.85\textwidth]{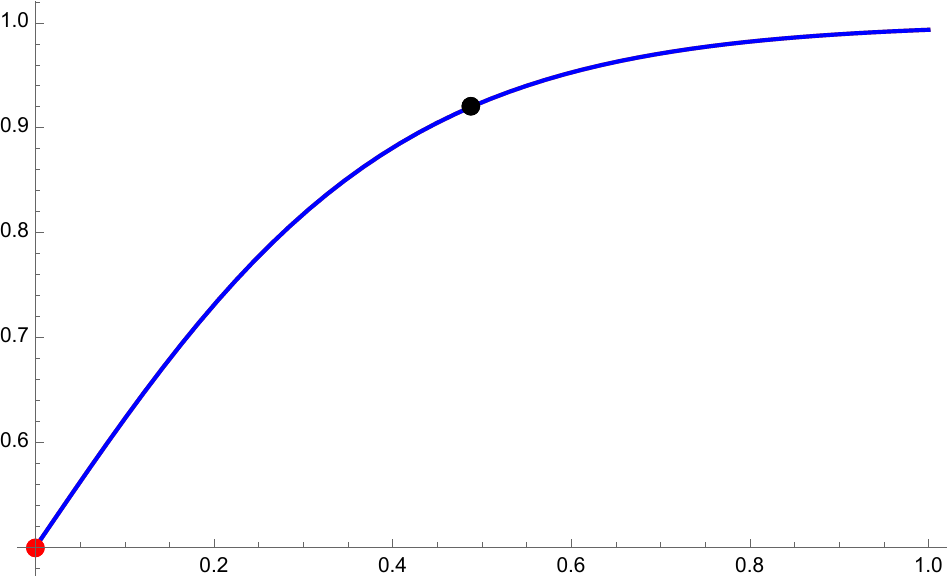}
        \includegraphics[width=0.85\textwidth]{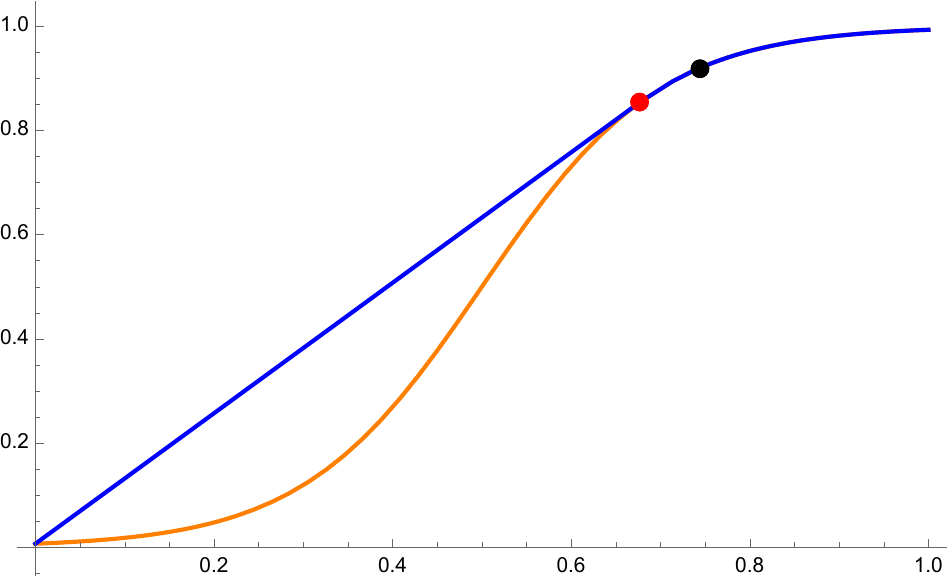}
        \caption{Slices of $f(x)$ (orange) and its envelope (blue) when $x_1 = 1$ (top) and $x_2=1$ (bottom). The black points are the intersection of the line $w^\top x + b = \hat{z}$ with each slice, and the red points indicate the tie point on the slice.}
    \end{subfigure}
    \caption{Plots of $f(x) = \sigma(w^\top x + b)$ as defined in Example \ref{ex:main}, its concave envelope computed with Theorem \ref{thm:main}, and the slices given by setting the variables to 1.} \label{fig:functionandslices-env}
\end{figure}

\paragraph{Reach of Theorem \ref{thm:main}.}
\label{sec:reach}

While the STFE property may seem restrictive, a large portion of the most commonly used activation functions in deep learning satisfy it. In particular, all convex and S-shaped activations. In Table \ref{table:activations} of Appendix \ref{app:list}, we list common activations of these types.

In addition, Theorem \ref{thm:main} apparently only yields \emph{one} of the envelopes needed in \eqref{eq:hulltoenvelopes}.
However, convex envelopes can also be obtained. When $\sigma$ is convex, the convex envelope is $\sigma$ itself. When $\sigma$ is S-shaped, $\sigma'(z) = -\sigma(-z)$ is also S-shaped, and computing the \emph{concave} envelope of $\sigma'(w^\top y - b)$ yields the \emph{convex} envelope of $\sigma(w^\top x + b)$ using the change of variables $y=-x$.

\paragraph{Connection to non-recursive formula in the convex case.}
Fix $x\in[0,1]^n$ and let $\pi$ be a permutation such that $x_{\pi(1)} \geq x_{\pi(2)} \geq \cdots \geq x_{\pi(n)}$. In \cite[Corollary 3.14]{tawarmalani2013explicit}, the authors show that if $\sigma$ is convex, then
\begin{equation}\label{eq:lovaszformula}
\conc(f,[0,1]^n)(x) = \sum_{i=1}^{n} 
   \Big(
      f\Big( \sum_{j=1}^{i} e_{\pi(j)} \Big)
      - 
      f\Big( \sum_{j=1}^{i-1} e_{\pi(j)} \Big)
   \Big)
   x_{\pi(i)}
   + f(0).
   \end{equation}
Let us recover \eqref{eq:lovaszformula} via Theorem \ref{thm:main}.
For brevity, we avoid edge cases by assuming $\sigma$ is not affine. Then, the tie point of $\sigma$ is always $w^\top \mathbf{1} + b$, and the only regions with non-empty interior are $R_i$. Also note that $\pi$ sorts the elements of $x/x_i$ as well. We proceed by induction on $n$: the base case is direct, and for the inductive step, take $x\in R_k$, then $\pi(1) = k $ and thus

\begin{align}
& \conc(f,[0,1]^n)(x)
= \, \sigma(b) + x_k \conc(f_{-k} - \sigma(b),[0,1]^{n-1})\Big(\frac{x_{-k}}{x_k}\Big) \nonumber \\
 = &\,  x_k\Big[ \sum_{i=2}^{n} \Big(
      f\Big( \sum_{j=1}^{i} e_{\pi(j)} \Big)
      - 
      f\Big( \sum_{j=1}^{i-1} e_{\pi(j)} \Big)
   \Big)
   \frac{x_{\pi(i)}}{x_k}  + f(e_{\pi(1)}) - \sigma(b) \Big] + \sigma(b)\label{eq:two}\\
= &\,  \sum_{i=2}^{n} \Big(
      f\Big( \sum_{j=1}^{i} e_{\pi(j)} \Big)
      - 
      f\Big( \sum_{j=1}^{i-1} e_{\pi(j)} \Big)
   \Big)
   x_{\pi(i)}  + x_{k}(f(e_{\pi(1)}) - f(0) ) + f(0)\label{eq:last}
\end{align}

The sum starting from $i=2$ in \eqref{eq:two} comes from applying \eqref{eq:lovaszformula} to $f_{-k}$, which is obtained from $f$ by fixing the $k$-th component to 1. Formula \eqref{eq:lovaszformula} is obtained from \eqref{eq:last} by noting that $x_{k}(f(e_{\pi(1)}) - f(0) )$ is the $i=1$ term of the sum. \\

The reader may wonder if a non-recursive formula akin to \eqref{eq:lovaszformula} is possible in the S-shaped case.
While one can certainly expand the recursion in Theorem \ref{thm:main}, there are two key issues that make it difficult to derive a clean, simple closed-form expression.
Firstly, unlike the convex case, evaluating the envelope in the S-shaped case requires more than evaluating $f$ at the vertices of the domain. And secondly, the tie points (used for computing the regions $R_f, R_l$, and $R_i$) may change when restricting to lower-dimensional faces.
The recursion in Theorem \ref{thm:main} is the simplest expression we have found yet, since it incorporates these two issues into the recursion itself.

\section{Preliminary lemmas} \label{sec:proofs}

We begin by listing a number of simple properties for a $\sigma$ function that satisfies STFE.
Since they are rather intuitive, we relegate their proofs to Appendix \ref{app:oned}. We note that Case 2 of the following lemma, in particular, is not direct.

\begin{lemma}\label{lemma:onedprops}
Let $\sigma:\mathbb{R}\to \mathbb{R}$ be a function satisfying STFE, with a tie point $\hat{z}$ on an interval $[L,U]$. Then the following properties hold:
\begin{enumerate}
    \item \textbf{(Decreasing upper bound, case 1)} For every $\tilde{U}\in [\hat{z}, U]$,  
    \[\conc(\sigma,[L, \tilde{U}])(z) = \conc(\sigma,[L, U])(z)\quad \forall z\in [L, \tilde{U}]\]
    and $\hat{z} $ is the tie point of $\sigma$ in the interval $[L, \tilde{U}]$ as well.
    \item \textbf{(Decreasing upper bound, case 2)} For every $\tilde{U}\in [L, \hat{z}]$,  
    \[\conc(\sigma,[L, \tilde{U}])(z) = \sigma(L) + \frac{\sigma(\tilde{U}) - \sigma(L)}{\tilde{U} - L} (z-L).\]
    \item \textbf{(Increasing lower bound)} For $\tilde{L} \in [L, \hat{z}]$ let $\tilde{z}$ be the tie point of $\sigma$ in $[\tilde{L}, U]$. Then $\tilde{z} \leq \hat z$.
\end{enumerate}
\end{lemma}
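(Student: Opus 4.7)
The plan is to attack all three parts with a common strategy: put the envelope on the sub-interval into STFE form, combine it with the fact that restricting a concave upper bound from a larger interval yields a concave upper bound on a smaller one (so $\conc(\sigma, I)|_{I'} \geq \conc(\sigma, I')$ whenever $I' \subseteq I$), and then leverage the definition of the tie point as the \emph{smallest} point realizing the STFE description to pin down the tie point on the sub-interval.

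For Part 1, I would first observe that the restriction of $\conc(\sigma,[L,U])$ to $[L,\tilde U]$, by the STFE form on $[L,U]$ combined with $\tilde U \geq \hat z$, equals the secant from $(L,\sigma(L))$ to $(\hat z,\sigma(\hat z))$ on $[L,\hat z]$ and equals $\sigma$ on $[\hat z,\tilde U]$. This restriction is a concave upper bound of $\sigma$ on $[L,\tilde U]$, giving $\conc(\sigma,[L,\tilde U]) \leq \conc(\sigma,[L,U])|_{[L,\tilde U]}$, while the reverse inequality is the standard restriction argument. The resulting equality already proves the displayed identity and shows that $\hat z$ is \emph{a} valid tie point on $[L,\tilde U]$. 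For minimality, I would suppose $\hat z'' < \hat z$ were also a tie point on $[L,\tilde U]$ and compare the two STFE expressions for the same envelope at $z=\hat z''$: this would force the three points $(L,\sigma(L))$, $(\hat z'',\sigma(\hat z''))$, $(\hat z,\sigma(\hat z))$ to be collinear and $\sigma$ to coincide with this line on $[\hat z'',\hat z]$. But then $\hat z''$ itself would satisfy the STFE description on $[L,U]$, contradicting the minimality of $\hat z$ there.

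For Part 2, my plan is to invoke Part 1 with upper bound $\hat z$ to conclude that $\conc(\sigma,[L,\hat z])$ equals the secant $L_1$ from $(L,\sigma(L))$ to $(\hat z,\sigma(\hat z))$. In particular $\sigma \leq L_1$ on $[L,\hat z]$, so for any $\tilde U \in [L,\hat z]$ the function $L_1|_{[L,\tilde U]}$ is a concave upper bound of $\sigma$ on $[L,\tilde U]$. Applying STFE on $[L,\tilde U]$ writes $\conc(\sigma,[L,\tilde U])$ as a secant from $(L,\sigma(L))$ up to some tie point $\hat z_{\tilde U}$ followed by $\sigma$ on $[\hat z_{\tilde U},\tilde U]$. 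To close, I would show $\hat z_{\tilde U}=\tilde U$: otherwise $\sigma$ would be concave on $[\hat z_{\tilde U},\tilde U]$ while simultaneously dominated by $L_1$ there, and I expect to combine these with the minimality of $\hat z$ as the tie point on $[L,U]$ to force $\sigma$ to be affine on $[\hat z_{\tilde U},\tilde U]$, in which case the two STFE representations (with tie points $\hat z_{\tilde U}$ and $\tilde U$) coincide. This is the step I expect to be the main obstacle: without direct convexity of $\sigma$ anywhere, the passage from ``$\sigma$ below the chord of the larger interval $[L,\hat z]$'' to ``$\sigma$ below the chord of the smaller interval $[L,\tilde U]$'' must be routed through the STFE characterization and the smallest-tie-point convention rather than by a direct pointwise comparison.

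For Part 3, I would again exploit the restriction: $\conc(\sigma,[L,U])|_{[\tilde L,U]}$ equals $L_1$ on $[\tilde L,\hat z]$ and $\sigma$ on $[\hat z,U]$, so it is a concave upper bound of $\sigma$ on $[\tilde L,U]$ taking value $\sigma(\hat z)$ at $\hat z$. Hence $\conc(\sigma,[\tilde L,U])(\hat z)=\sigma(\hat z)$. Applying STFE on $[\tilde L,U]$ with tie point $\tilde z$, if $\tilde z \leq \hat z$ we are done; otherwise, evaluating the STFE form at $\hat z$ would force $\sigma(\hat z)$ to lie on the secant from $(\tilde L,\sigma(\tilde L))$ to $(\tilde z,\sigma(\tilde z))$, so the three points $(\tilde L,\sigma(\tilde L))$, $(\hat z,\sigma(\hat z))$, $(\tilde z,\sigma(\tilde z))$ are collinear. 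Combining this collinearity with $\sigma$ concave on $[\hat z,U]\supseteq[\hat z,\tilde z]$ (from STFE on $[L,U]$) and with $\sigma$ lying below the same secant on $[\tilde L,\tilde z]$ forces $\sigma$ to be affine on $[\hat z,\tilde z]$. But then $\hat z$ itself satisfies the STFE description on $[\tilde L,U]$, contradicting the minimality of $\tilde z > \hat z$.
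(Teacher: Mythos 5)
Your arguments for Parts 1 and 3 are sound and essentially the paper's: restrict the envelope from the larger interval, note that the restriction is a concave overestimator on the smaller one, and use the smallest-tie-point convention to pin down the tie point (your collinearity argument for minimality in Part 1 fills in a step the paper dismisses as ``by definition''). The problem is Part 2, at exactly the step you flag as the main obstacle; the resolution you sketch does not close it, for two reasons.

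First, even if you could show that $\sigma$ is affine on $[\hat z_{\tilde U},\tilde U]$, the conclusion would not follow. The STFE representation with tie point $\hat z_{\tilde U}$ is a two-piece concave function: the secant from $(L,\sigma(L))$ to $(\hat z_{\tilde U},\sigma(\hat z_{\tilde U}))$, followed by the affine piece of $\sigma$. This coincides with the single secant from $(L,\sigma(L))$ to $(\tilde U,\sigma(\tilde U))$ only if the two pieces are collinear, i.e., only if there is no kink at $\hat z_{\tilde U}$. Concavity of the envelope only gives that the first slope is at least the second; if that inequality were strict, $\sigma(\hat z_{\tilde U})$ would lie strictly above the full secant and the lemma's conclusion would fail. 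Ruling out this kink is essentially the entire content of Part 2, so ``$\sigma$ affine on the tail'' cannot be the end of the argument. Second, the mechanism by which minimality of $\hat z$ on $[L,U]$ is supposed to force affineness on $[\hat z_{\tilde U},\tilde U]$ is left unspecified, and it cannot be a one-line appeal: $[\hat z_{\tilde U},\tilde U]$ sits strictly inside $[L,\hat z]$, so concavity of $\sigma$ there says nothing directly about tie points of $[L,U]$. The paper's proof has to (i) pass to a \emph{maximal} interval $[m,M]\supseteq[\hat z_{\tilde U},\tilde U]$ on which $\sigma$ is concave, (ii) show $M\le\hat z$ by exhibiting an improved envelope on $[L,U]$ otherwise, (iii) extract affineness of $\sigma$ on all of $[m,M]$ from a limiting argument applied to $\conc(\sigma,[m,M+\varepsilon])$ as $\varepsilon\to 0$, using that concavity fails just past $M$, and then (iv) when $m>L$, derive a contradiction via a point $\tilde L\in(L,m)$ where the affine function is not tight, the already-proven Part 3, and a second limiting argument; only the surviving case $m\le L$ (where $\sigma$ is affine on all of $[L,\tilde U]$, not merely on the tail) yields the secant. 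Steps (ii)--(iv) are what is missing from your sketch, and (iv) is precisely where the kink from the first point gets excluded.
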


We also state a result from \cite{tawarmalani2002convex} (rephrased), which we will repeatedly use.

\begin{lemma}{\cite[Corollary 2]{tawarmalani2002convex}}\label{cor:envinfaces}
    Let $D$ be a convex set and $D'$ be a non-empty face of $D$. Assume $\phi(x)$ is the concave envelope of $f(x)$ over $D$. Then, the restriction of $\phi(x)$ to $D'$ is the concave envelope of $f(x)$ over $D'$.
\end{lemma}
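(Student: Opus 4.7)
The plan is to establish the two-sided inequality $\phi|_{D'} = \conc(f, D')$ on $D'$ by exploiting the defining property of a face: any strict convex combination of points of $D$ that lands in $D'$ must in fact be a convex combination of points of $D'$.

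First I would handle the easy direction. The restriction $\phi|_{D'}$ is concave, since concavity is inherited on any convex subset, and it satisfies $\phi|_{D'}(x) \geq f(x)$ for every $x \in D'$ because $\phi \geq f$ on all of $D \supseteq D'$. Since $\conc(f, D')$ is by definition the \emph{smallest} concave function upper-bounding $f$ on $D'$, this immediately yields $\conc(f, D')(x) \leq \phi(x)$ for every $x \in D'$.

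For the reverse inequality, I would use the standard representation of the concave envelope as a supremum over finite convex combinations: for $x \in D$,
\[
\phi(x) \,=\, \sup\!\left\{\sum_{i=1}^k \lambda_i f(x_i) \,:\, k \geq 1,\; x_i \in D,\; \lambda_i \geq 0,\; \sum_{i=1}^k \lambda_i = 1,\; \sum_{i=1}^k \lambda_i x_i = x\right\}.
\]
Fix $x \in D'$ and any feasible representation with all $\lambda_i > 0$. Because $D'$ is a face of $D$, the face property forces $x_i \in D'$ for every $i$. Hence the supremum is unchanged if we restrict the $x_i$ to lie in $D'$, and what we obtain is precisely the analogous supremum defining $\conc(f, D')(x)$. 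This yields $\phi(x) \leq \conc(f, D')(x)$ on $D'$, completing the proof.

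The main technical point to justify is the supremum representation of $\phi$: this is standard when $f$ is bounded above and $D$ is compact (and more generally with minor closure caveats). In the usage relevant to this paper, $D$ is the closed box $[0,1]^n$ and $f$ is continuous, so the representation holds without subtleties. An alternative route would be to extend any concave upper bound $\psi$ of $f$ on $D'$ to a concave upper bound on $D$ via a supporting hyperplane; however, this works cleanly only when $D'$ is an exposed face of $D$, whereas the convex-combination approach above treats all faces uniformly and is therefore what I would choose.
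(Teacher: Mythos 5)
Your proof is correct. Be aware, though, that the paper does not actually prove this statement: it is imported (slightly rephrased) from Corollary 2 of Tawarmalani and Sahinidis \cite{tawarmalani2002convex}, so there is no in-paper argument to compare against. Your route --- the trivial inequality $\conc(f,D')\leq \phi$ on $D'$ from minimality of the envelope, plus the reverse inequality via the representation of $\phi(x)$ as a supremum of $\sum_i\lambda_i f(x_i)$ over convex combinations, combined with the face property forcing all constituent points of such a combination into $D'$ --- is the standard argument for this fact and is sound. Two small points you should make explicit in a polished write-up: first, the face property is usually stated for strict convex combinations of \emph{two} points, so the $k$-point version you invoke needs a one-line induction; second, the identification of the smallest concave overestimator with the supremum over convex combinations requires the latter to be finite, which you correctly flag and which holds in the paper's setting ($\sigma$ continuous, $D=[0,1]^n$ compact). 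Your closing observation that the alternative strategy of extending a concave overestimator from $D'$ back to $D$ via a supporting hyperplane works cleanly only for exposed faces is accurate and is a good reason to prefer the convex-combination argument.
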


We continue with a ``baseline'' concave overestimator for $f$.

\begin{lemma}\label{lemma:mccormick}
Consider $w,b,\sigma,f$ as in Theorem \ref{thm:main}. Define $h:[0,1]^n \to \mathbb{R}$ as
\[
h(x) = \left\{ \begin{array}{ll}
f(x)&\text{if } x\in R_f \\ 
     \sigma(b) + \frac{\sigma(\hat{z}) - \sigma(b)}{\hat{z} - b} (w^\top x) & \text{otherwise} \\
      
\end{array}\right.
\]
where $R_f$ is defined using Definition \ref{def:regions} with $\hat{z}$ as the tie point of $\sigma$ in $[b,w^\top \mathbf{1} + b]$. Then $h$ is a concave overestimator of $f$.
\end{lemma}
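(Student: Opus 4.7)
The plan is to recognize that $h$ admits a one-line description as a univariate concave envelope precomposed with an affine map, which immediately yields both asserted properties. Concretely, let $g := \conc(\sigma, [b, w^\top \mathbf{1} + b])$, the one-dimensional concave envelope of $\sigma$ on the interval traversed by $w^\top x + b$ as $x$ ranges over $[0,1]^n$ (recall $w > 0$).

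The first step is to invoke the STFE property on $[b, w^\top \mathbf{1} + b]$: with $\hat z$ as the tie point (the same $\hat z$ appearing in the lemma's definition of $R_f$), $g$ has the explicit piecewise form
\[
g(z) = \begin{cases} \sigma(b) + \frac{\sigma(\hat z) - \sigma(b)}{\hat z - b}(z - b) & z \in [b, \hat z), \\ \sigma(z) & z \in [\hat z, w^\top \mathbf{1} + b]. \end{cases}
\]
I would then verify directly that $h(x) = g(w^\top x + b)$ on $[0,1]^n$: when $x \in R_f$, the inequality $w^\top x + b \geq \hat z$ puts us in the second branch of $g$, giving $g(w^\top x + b) = \sigma(w^\top x + b) = f(x) = h(x)$; when $x \notin R_f$, the inequality $w^\top x + b < \hat z$ puts us in the first branch, where the identity $z - b = w^\top x$ rewrites the secant exactly as $\sigma(b) + \frac{\sigma(\hat z) - \sigma(b)}{\hat z - b}(w^\top x)$, matching the second case of $h$.

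With the identity $h = g \circ (w^\top (\cdot) + b)$ in hand, both conclusions follow immediately. The overestimator property $h \geq f$ reduces to $g(z) \geq \sigma(z)$ for all $z \in [b, w^\top \mathbf{1} + b]$, which holds by definition of a concave envelope. Concavity of $h$ follows because $g$, being a concave envelope on a compact interval, is concave, and the affine map $x \mapsto w^\top x + b$ sends $[0,1]^n$ into $[b, w^\top \mathbf{1} + b]$; concavity is preserved under composition with affine functions.

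There is no serious obstacle: the only point worth double-checking is that the two definitions of $\hat z$ line up (the tie point used to define $R_f$ versus the tie point appearing in the STFE description of $g$), but the lemma statement takes $\hat z$ to be the tie point of $\sigma$ on $[b, w^\top \mathbf{1}+b]$, so this is by construction. The whole argument is therefore mostly a matter of unwinding definitions and appealing to the trivial fact that concavity survives affine precomposition.
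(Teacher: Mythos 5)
Your proof is correct and is essentially the paper's own argument, just written out in full: the paper's entire proof is the one-line observation that $h(x) = \conc(\sigma,[b,w^\top \mathbf{1} + b])(w^\top x + b)$, and your verification of that identity via the STFE form of the univariate envelope, followed by the appeal to affine precomposition preserving concavity, is exactly the unwinding the paper leaves implicit.
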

\begin{proof}
    This proof follows directly since $h(x) = \conc(\sigma,[b,w^\top \mathbf{1} + b])(w^\top x + b)$. 
\end{proof}

\begin{lemma}\label{lemma:htight}
    Consider $h:[0,1]^n \to \mathbb{R}$ as in Lemma \ref{lemma:mccormick} and the regions defined in Definition \ref{def:regions}. For any $x \in R_f \cup R_l$, $h(x) = \conc(f,[0,1]^n)(x)$.
\end{lemma}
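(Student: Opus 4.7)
The plan is to prove both inequalities separately, exploiting that Lemma~\ref{lemma:mccormick} already gives us one direction for free: since $h$ is a concave overestimator of $f$, the concave envelope $\conc(f,[0,1]^n)$, which is the pointwise smallest such function, satisfies $\conc(f,[0,1]^n)(x) \leq h(x)$ everywhere. Thus the task reduces to showing the reverse inequality $h(x) \leq \conc(f,[0,1]^n)(x)$ on $R_f \cup R_l$.

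The case $x \in R_f$ is immediate: there $h(x) = f(x)$, and any concave envelope dominates $f$. The substantive case is $x \in R_l$, and the strategy is to write $x$ as a concrete convex combination of two points in $[0,1]^n$ on which we can evaluate $f$ explicitly, so that the resulting lower bound for $\conc(f,[0,1]^n)(x)$ matches $h(x)$ exactly.

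Concretely, assuming $\hat{z} > b$ (otherwise $R_l \subseteq \{0\}$ and the claim is trivial since $h(0) = \sigma(b) = f(0)$), and $x \neq 0$, set
\[
t^* \;=\; \frac{\hat{z} - b}{w^\top x}.
\]
Since $w > 0$ and $x \geq 0$ with $x \neq 0$, we have $w^\top x > 0$, so $t^*$ is well-defined. The first defining inequality of $R_l$, namely $w^\top x + b < \hat{z}$, yields $t^* > 1$, while the second, $w^\top x \geq (\hat{z}-b)\|x\|_\infty$, rearranges to $t^*\|x\|_\infty \leq 1$, so $x^* := t^* x \in [0,1]^n$. By construction $w^\top x^* + b = \hat{z}$, hence $f(x^*) = \sigma(\hat{z})$. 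Writing
\[
x \;=\; \tfrac{1}{t^*}\, x^* + \bigl(1 - \tfrac{1}{t^*}\bigr) \cdot 0,
\]
concavity and the majorization $\conc(f,[0,1]^n) \geq f$ give
\[
\conc(f,[0,1]^n)(x) \;\geq\; \tfrac{1}{t^*}\sigma(\hat{z}) + \bigl(1 - \tfrac{1}{t^*}\bigr)\sigma(b).
\]
A direct algebraic check, substituting $w^\top x = (\hat{z}-b)/t^*$ into the definition of $h$, shows that this convex combination is exactly $h(x)$, completing the proof.

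The only obstacle I anticipate is bookkeeping around corner cases: ensuring that $t^* \geq 1$ and $t^* x \in [0,1]^n$ both follow cleanly from the two defining inequalities of $R_l$, and separately handling $x = 0$ as well as the degenerate situation $\hat{z} = b$, where $h$ is defined via a $0/0$ slope but $R_l$ is also trivial so the statement is vacuous on the non-trivial part. No deep machinery beyond Lemma~\ref{lemma:mccormick} and the concavity of the envelope is required.
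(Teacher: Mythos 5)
Your argument is correct and is essentially identical to the paper's proof: your scaling factor $t^* = (\hat{z}-b)/(w^\top x)$ is exactly the paper's $\lambda$, and both proofs express $x$ as a convex combination of the origin and the rescaled point on the hyperplane $w^\top x + b = \hat{z}$, then use concavity of the envelope together with the fact that $h$ is affine on $R_l$ and agrees with $f$ at both endpoints. The only cosmetic difference is that you verify the resulting convex combination equals $h(x)$ by direct substitution, whereas the paper invokes affinity of $h$ on the segment; the corner cases you flag ($x=0$, $\hat z = b$) are handled the same way.
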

\begin{proof}
    If $x\in R_f$, the result is trivial. For $x\in R_l$, it follows since it $h(x)$ is determined by the secant between two points where $h$ is equal to $f$.
\end{proof}
We conclude with three tie-point-related properties.
\begin{lemma}\label{lemma:biggerthantie}
Let $\hat{z}$ be the tie point of $\sigma$ in $[b,w^\top \mathbf{1} + b]$. For every $z \geq \hat{z}$,
\[\conc(f,[0,1]^{n}) (x) = \conc(f,[0,1]^{n} \cap \{x\,:\, w^\top x + b \leq z\} ) (x).\]
\end{lemma}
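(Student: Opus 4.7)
Plan: Let $D := [0,1]^n$ and $D_z := D \cap \{x : w^\top x + b \leq z\}$. The direction $\conc(f, D_z)(x) \leq \conc(f, D)(x)$ for $x \in D_z$ is immediate, since the restriction of $\conc(f, D)$ to $D_z$ is a concave overestimator of $f$ on $D_z$. For the reverse, my plan is to extend $\phi := \conc(f, D_z)$ back to $D$ by
\[
\tilde\phi(x) := \begin{cases} \phi(x), & x \in D_z, \\ f(x), & x \in D \setminus D_z, \end{cases}
\]
and show that $\tilde\phi$ is a concave overestimator of $f$ on $D$; the minimality of $\conc(f, D)$ will then yield $\conc(f, D) \leq \tilde\phi$ on $D$, which restricted to $D_z$ is the inequality needed.

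The extension $\tilde\phi$ dominates $f$ trivially, and it is continuous across the separating face $F_z := \{x \in D : w^\top x + b = z\}$: by Lemma \ref{cor:envinfaces}, since $F_z$ is a face of $D_z$ on which $f \equiv \sigma(z)$, one has $\phi|_{F_z} = \conc(f|_{F_z}, F_z) \equiv \sigma(z)$, matching $f|_{F_z}$. The two pieces of $\tilde\phi$ are concave separately: $\phi$ by construction on $D_z$, and $f(x) = \sigma(w^\top x + b)$ on $D \setminus D_z$ because STFE forces $\sigma$ to coincide with its concave envelope $E := \conc(\sigma, [b, w^\top \mathbf{1} + b])$ on $[\hat z, w^\top \mathbf{1} + b]$, placing $[z, w^\top \mathbf{1} + b]$ inside the concave region (using $z \geq \hat z$).

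The real work is to establish concavity across $F_z$, and my strategy is to exhibit an affine function that supports $\tilde\phi$ from above on both sides of $F_z$ simultaneously. Take $\gamma = \sigma'(z^+)$; this is a supergradient of $E$ at $z$ because $E = \sigma$ is concave on $[\hat z, w^\top \mathbf{1} + b]$ and $z \geq \hat z$. Define $\ell^*(x) := \sigma(z) + \gamma(w^\top x + b - z)$. Then $\sigma(y) \leq E(y) \leq \sigma(z) + \gamma(y-z)$ for every $y \in [b, w^\top \mathbf{1} + b]$ (the first inequality by definition of $E$, the second by the supergradient inequality at $z$, using $E(z) = \sigma(z)$), yielding $\ell^* \geq f$ on $D$. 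This implies $\ell^* \geq \tilde\phi$ on $D$ (on $D_z$, $\ell^*$ is affine and dominates $f$, hence dominates $\phi$; on $D \setminus D_z$, $\ell^* \geq f = \tilde\phi$), while $\ell^* = \sigma(z) = \tilde\phi$ on $F_z$.

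To finish, take any segment $x(t) = (1-t)x_1 + t x_2$ in $D$ crossing $F_z$ at $t_0 \in (0,1)$, with $x_1 \in D_z$, $x_2 \in D \setminus D_z$; set $g(t) := \tilde\phi(x(t))$ and $y_i := w^\top x_i + b$. Then $g$ is concave on $[0, t_0]$ and on $[t_0, 1]$, continuous at $t_0$, with right derivative $g'(t_0^+) = \sigma'(z^+)(y_2 - y_1) = \gamma(y_2 - y_1)$. Using $g \leq \ell^* \circ x$ with equality at $t_0$, a standard supporting-hyperplane computation gives $g'(t_0^-) \geq \gamma(y_2 - y_1) = g'(t_0^+)$, the compatibility condition for concavity at $t_0$. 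Combined with piecewise concavity, $\tilde\phi$ is concave on $D$, completing the argument. The main obstacle is the construction of $\ell^*$, which depends critically on both STFE (to ensure $\sigma$ is concave past $\hat z$) and on the hypothesis $z \geq \hat z$ (so that $\gamma$ is a genuine supergradient of $E$ at $z$ across the full interval $[b, w^\top \mathbf{1} + b]$).
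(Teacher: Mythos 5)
Your proof is correct, and while it shares the paper's high-level skeleton (the easy inequality by domain restriction, then extending $\conc(f,D_z)$ by $f$ outside $D_z$ and invoking minimality of the envelope), it implements the crucial concavity-of-the-gluing step by a genuinely different mechanism. The paper reuses the ray-scaling argument of Lemma \ref{lemma:htight} to show that $\conc(f,D_z)$ is already \emph{tight} (equal to $f$) on the full-dimensional strip $\{x : \hat z \le w^\top x + b \le z\}$, so the two concave pieces of the extension overlap on a region of positive width and concavity of the glued function follows from overlapping concavity along every segment. You instead work only at the separating hyperplane $F_z$: you get continuity there from Lemma \ref{cor:envinfaces} (since $f$ is constant on that face), and you certify the derivative-compatibility condition at the interface by exhibiting the explicit affine overestimator $\ell^*$ built from a supergradient of $\conc(\sigma,[b,w^\top\mathbf{1}+b])$ at $z$, which touches $\tilde\phi$ exactly on $F_z$. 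Both routes use the hypothesis $z \ge \hat z$ in an essential way (tightness of the one-dimensional envelope past the tie point versus validity of the supergradient at $z$ over the whole interval). Your version is more self-contained and makes explicit the concavity verification that the paper leaves terse; the paper's version is shorter because it recycles an argument already established and, as a by-product, records the tightness of the restricted envelope on the strip.
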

\begin{proof}
    Using Lemma \ref{lemma:htight}, it must be that
    \(f(x) = \conc(f,[0,1]^{n} \cap \{x\,:\, w^\top x + b \leq z\} )(x) \)
    for every $x$ such that $w^\top x + b\in [\hat{z}, z]$. This implies the result.
\end{proof}

\begin{lemma}\label{lemma:eiinRi}
Let $\hat{z}$ be the tie point of $\sigma$ in $[b,w^\top \mathbf{1} + b]$, and define the region $R_i$ according to Definition \ref{def:regions}.
If $R_i\neq \emptyset$, then $e_i \in R_i$, where $e_i$ is the $i$-th canonical vector. In particular, this implies $w_i + b < \hat z$.
\end{lemma}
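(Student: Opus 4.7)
The plan is to show that if there is any $x\in R_i$, then the main strict inequality defining $R_i$ survives dividing by $\|x\|_\infty=x_i$, which forces $w_i+b<\hat z$, and then to verify directly that $e_i$ meets all three defining conditions.

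First I would observe that any $x\in R_i$ must have $x_i>0$. Indeed, $x_i$ is the (weakly) largest coordinate of $x$ by the ordering conditions, so $\|x\|_\infty=x_i$; if $x_i=0$ then both sides of $w^\top x+b\|x\|_\infty<\hat z\|x\|_\infty$ would be zero, contradicting the strict inequality. So $x_i>0$, and the defining inequality can be divided by $x_i$ to give
\[
w^\top\!\left(\tfrac{x}{x_i}\right)+b<\hat z.
\]
Next, since $w>0$ and the coordinates of $x/x_i$ are all in $[0,1]$ with the $i$-th entry equal to $1$, we have
\[
w^\top\!\left(\tfrac{x}{x_i}\right)=w_i+\sum_{j\neq i}w_j\,\tfrac{x_j}{x_i}\ \geq\ w_i,
\]
so $w_i+b<\hat z$. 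This proves the ``in particular'' conclusion.

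Given $w_i+b<\hat z$, checking that $e_i\in R_i$ is immediate: $e_i\in[0,1]^n$, $\|e_i\|_\infty=1$, so the first condition reads $w_i+b<\hat z$, which we just established; and the coordinate ordering conditions hold trivially because $(e_i)_i=1$ while $(e_i)_j=0$ for $j\neq i$, which gives $(e_i)_i>(e_i)_j$ for $j<i$ and $(e_i)_i\geq (e_i)_j$ for $j\geq i$. There is no real obstacle here; the only subtle point is justifying $x_i>0$ to allow the division, which is forced by the strictness of the inequality in the definition of $R_i$.
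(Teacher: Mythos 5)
Your proposal is correct and follows essentially the same route as the paper: take $x\in R_i$, note it is nonzero (so that $\|x\|_\infty=x_i>0$), divide the strict inequality by $\|x\|_\infty$, and use $w>0$ together with the $i$-th coordinate being $1$ to conclude $w_i+b<\hat z$, from which membership of $e_i$ follows. Your write-up is slightly more explicit than the paper's (in justifying $x_i>0$ and in checking the ordering conditions for $e_i$), but the argument is the same.
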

\begin{proof}
Take $x\in R_i$. Clearly $x\neq 0$.  Then we can take $\tilde{x} = x/\|x\|_\infty$ which satisfies
\(w^\top \tilde{x} + b < \hat{z}.\)
Additionally, $\tilde{x}_i = 1$, thus
\(\hat{z} - b > w^\top \tilde{x} \geq w_i = w^\top e_i.\)
\end{proof}

\begin{lemma}\label{lemma:concinRi}
    Let $\hat{z}$ be the tie point of $\sigma$ in $[b,w^\top \mathbf{1} + b]$, and suppose $R_i\neq \emptyset$. Then,
    \(\conc(f_{-i},[0,1]^{n-1})  = \conc(f_{-i},[0,1]^{n-1} \cap \{x_{-i}\,:\, w_{-i}^\top x_{-i} + w_i + b \leq \hat z\} ). \)
\end{lemma}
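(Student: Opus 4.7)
The plan is to reduce the claim to Lemma~\ref{lemma:biggerthantie} applied to $f_{-i}$ itself. Observe that
\[f_{-i}(x_{-i}) = \sigma(w_{-i}^\top x_{-i} + b'), \qquad b' := b + w_i,\]
so $f_{-i}$ is of exactly the form handled by Lemma~\ref{lemma:biggerthantie}, with positive weights $w_{-i} \in \mathbb{R}^{n-1}_{+}$, bias $b'$, and argument ranging over $[b', w^\top \mathbf{1} + b]$ as $x_{-i}$ varies in $[0,1]^{n-1}$. Let $\hat{z}'$ denote the tie point of $\sigma$ on this shifted interval; this is the tie point that Lemma~\ref{lemma:biggerthantie} naturally uses when invoked on $f_{-i}$.

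First, I would show $\hat{z}' \leq \hat{z}$. Since $R_i \neq \emptyset$, Lemma~\ref{lemma:eiinRi} yields $b' = b + w_i < \hat{z}$, so $b' \in [b, \hat{z}]$. Applying the Increasing lower bound property (Lemma~\ref{lemma:onedprops}, case~3) with $L = b$, $U = w^\top \mathbf{1} + b$, and $\tilde{L} = b'$ then gives $\hat{z}' \leq \hat{z}$ directly.

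Finally, I would invoke Lemma~\ref{lemma:biggerthantie} on $f_{-i}$ with the threshold $z = \hat{z}$, which is permitted precisely because $\hat{z} \geq \hat{z}'$. The resulting identity is
\[\conc(f_{-i}, [0,1]^{n-1})(x_{-i}) = \conc\!\left(f_{-i},\, [0,1]^{n-1} \cap \{x_{-i} : w_{-i}^\top x_{-i} + b' \leq \hat{z}\}\right)(x_{-i})\]
for every $x_{-i}$ with $w_{-i}^\top x_{-i} + b' \leq \hat{z}$, which is exactly the statement of the lemma.

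The main (mild) obstacle is just bookkeeping: the lemma is stated in terms of the \emph{outer} tie point $\hat{z}$ on $[b, w^\top \mathbf{1} + b]$, whereas Lemma~\ref{lemma:biggerthantie} invoked on $f_{-i}$ intrinsically refers to $\hat{z}'$ on the smaller interval $[b', w^\top \mathbf{1} + b]$. The content of Lemma~\ref{lemma:onedprops}(3) is exactly the bridge $\hat{z}' \leq \hat{z}$ that allows Lemma~\ref{lemma:biggerthantie} to be applied with $z = \hat{z}$, and the hypothesis $R_i \neq \emptyset$ (via Lemma~\ref{lemma:eiinRi}) is what guarantees that we fall into case~3 of Lemma~\ref{lemma:onedprops} rather than case~2.
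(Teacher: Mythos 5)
Your proposal is correct and follows essentially the same route as the paper's proof: both use Lemma~\ref{lemma:eiinRi} to get $w_i + b < \hat{z}$, then Lemma~\ref{lemma:onedprops} (case 3) to conclude that the tie point of $\sigma$ on the shifted interval is at most $\hat{z}$, and finally apply Lemma~\ref{lemma:biggerthantie} to $f_{-i}$ with threshold $\hat{z}$. No gaps.
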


Before the proof, let us provide some intuition. Consider Figure \ref{fig:functionandslices-env}(b): Lemma \ref{lemma:concinRi} states that if we restrict the function until the black point, the envelope is the same as in the complete interval. In other words, the tie point on the slice is to the left of the black point. This is what the proof formalizes.

\begin{proof}
    As
    \(f_{-i}(x_{-i}) = \sigma (w_{-i}^\top x_{-i} + w_i + b),\)
    the domain of $\sigma$ in this case is $[w_i + b, w^\top \mathbf{1} + b]$, while in $f$ it is $[b, w^\top \mathbf{1} + b]$. 
    Let $\hat{z}^{-i}$ be the tie point of $\sigma$ in $[w_i + b, w^\top \mathbf{1} + b]$. By Lemma \ref{lemma:eiinRi} we have $w_i + b < \hat{z}$, so we can apply Lemma \ref{lemma:onedprops} (case 3) to obtain
    \(\hat{z} \geq \hat{z}^{-i}.\)
    To conclude, we apply Lemma \ref{lemma:biggerthantie} to $f_{-i}$. 
\end{proof}

\section{Concave envelope proof} \label{sec:mainproof}

To alleviate notation in the proofs, we assume in this section $f(0) = \sigma(b) = 0$. This can be achieved by considering $\sigma'(z) = \sigma(z) - \sigma(b)$ instead of $\sigma$.

\begin{definition}\label{def:g}
    Consider $w,b,\sigma,f$ as in Theorem \ref{thm:main} with $\sigma(b)= 0$. Define
\[
g(x) = \left\{ \begin{array}{ll}
     f(x)& \text{if } x\in R_f \\
     \frac{\sigma(\hat{z})}{\hat{z} - b} (w^\top x) & \text{if } x\in R_l\\
      x_i \conc(f_{-i},[0,1]^{n-1})\left(\frac{x_{-i}}{x_i}\right)  & \text{if } x\in R_i,\, i=1,\ldots n \\
\end{array}\right.
\]
where $R_f,R_l$ and $R_i$ are defined using Definition \ref{def:regions} with $\hat{z}$ as the tie point of $\sigma$ in $[b,w^\top \mathbf{1} + b]$.
\end{definition}

 We show that $g$ is precisely the concave envelope of $f$ in $[0,1]^n$ in this case; Theorem \ref{thm:main} is then obtained simply by shifting using $\sigma(b)$. Our strategy will be to use induction on the dimension $n$.

\subsection{General properties of the envelope candidate}

\begin{lemma}\label{lemma:gcontinuous}
    Consider $g$ as in Definition \ref{def:g}.
Then, $g$ is continuous on $[0,1]^n$.
\end{lemma}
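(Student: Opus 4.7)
The plan is to verify continuity of $g$ separately within each of the regions $R_f, R_l, R_1, \ldots, R_n$ and then match values across every shared boundary; since the regions partition $[0,1]^n$, this delivers continuity on the whole hypercube. Within a region, continuity is routine: on $R_f$ we have $g=f=\sigma(w^\top x+b)$, a composition of continuous maps; on $R_l$ we have $g$ affine in $x$; and on $R_i$ we have $g(x)=x_i\,\phi_i(x_{-i}/x_i)$ with $\phi_i:=\conc(f_{-i},[0,1]^{n-1})$. Here $\phi_i$ is simultaneously concave and upper semi-continuous (as the pointwise infimum of its continuous affine majorants), and a finite concave upper semi-continuous function on a convex set is continuous: the reverse inequality complementing upper semi-continuity is obtained by writing any sequence $y_n\to x$ as a convex combination that puts weight close to $1$ on $x$ and small weight on a fixed interior point of $[0,1]^{n-1}$, and then applying concavity. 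Since $0\notin R_i$ (the origin fails the strict inequality in Definition \ref{def:regions}), we have $x_i=\|x\|_\infty>0$ on $R_i$, making the perspective well-defined and continuous.

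The crux is matching the piecewise formulas on the shared boundaries, of which there are four types. On the shared boundary of $R_f$ and $R_l$ we have $w^\top x+b=\hat z$, collapsing both formulas to $\sigma(\hat z)$. On the shared boundary of $R_l$ and $R_i$ we have $w^\top x=(\hat z-b)\,x_i$ with $x_i=\|x\|_\infty$, giving $\sigma(\hat z)\,x_i$ from the $R_l$ side; for the $R_i$ side I would let $y:=x_{-i}/x_i$ and observe $w_{-i}^\top y+w_i+b=\hat z$. Lemma \ref{lemma:eiinRi} supplies $w_i+b<\hat z$, and Lemma \ref{lemma:onedprops}(3) then yields $\hat z^{-i}\leq\hat z$ for the tie point $\hat z^{-i}$ of $\sigma$ on $[w_i+b,w^\top\mathbf{1}+b]$, placing $y$ in the ``$R_f$'' region of the $f_{-i}$ sub-problem. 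Lemma \ref{lemma:htight} applied to $f_{-i}$ then gives $\phi_i(y)=f_{-i}(y)=\sigma(\hat z)$, matching the $R_l$ side.

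On the shared boundary of $\overline{R_f}$ and $\overline{R_i}$, the two conditions force $\|x\|_\infty=1$ and $w^\top x+b=\hat z$, so $x_i=1$ and both formulas collapse to $\sigma(\hat z)$ by the same reasoning. Finally, on the shared boundary of $\overline{R_i}$ and $\overline{R_j}$ for $i\neq j$ we have $x_i=x_j=\|x\|_\infty$, so the argument $x_{-i}/x_i$ of $\phi_i$ has its $j$-th coordinate equal to $1$, and symmetrically for $\phi_j$. Applying Lemma \ref{cor:envinfaces} to both, each restricts on the relevant $(n-2)$-dimensional face to $\conc(f',[0,1]^{n-2})$, where $f'$ is the common restriction of $f$ to $\{x_i=x_j=1\}$; because $x_i=x_j$, the two perspective evaluations then coincide. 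The main obstacles I anticipate are this last boundary case along with the relationship between $\hat z$ and $\hat z^{-i}$ needed in the second case, together with the (routine but not entirely trivial) continuity claim for the lower-dimensional envelope that underpins within-region continuity.
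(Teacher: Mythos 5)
Your argument is correct and mirrors the paper's proof almost step for step: region-wise continuity followed by value-matching on the four boundary types, invoking the same lemmas (Lemma \ref{lemma:eiinRi} together with Lemma \ref{lemma:onedprops} to place $x_{-i}/x_i$ in the tight region of the $f_{-i}$ subproblem, and Lemma \ref{cor:envinfaces} applied twice for the $R_i$--$R_j$ interface). The only point the paper treats that you omit is $x=0$, which lies in $\cl(R_i)$ for every nonempty $R_i$ (since $\lambda e_i\in R_i$ for all $\lambda\in(0,1]$) and where $x_{-i}/x_i$ is undefined; there one checks separately that every piece tends to $g(0)=\sigma(b)=0$, which is immediate because $\conc(f_{-i},[0,1]^{n-1})$ is bounded so the perspective $x_i\,\conc(f_{-i},[0,1]^{n-1})(x_{-i}/x_i)$ vanishes as $x_i\to 0$.
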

\begin{proof}
    Firstly, the function $g$ is equal to $h$ in $R_f\cup R_l$, which is continuous.
In each $R_i$ individually, the function is concave (thus continuous) since it is the perspective of a concave function \cite{gunluk2011perspective}. In $x=0$, all definitions evaluate to 0. 

If we take $x\in \cl(R_f) \cap \cl(R_i)$, with $x\neq 0$, we have
    \(\hat{z} \geq w^\top x/\|x\|_\infty + b  \geq w^\top x + b \geq \hat{z}. \)
    Since $w\in \mathbb{R}^n_{+}$ and $x\neq 0$, $w^\top x \neq 0$ and thus the previous inequalities imply $\|x\|_\infty = 1$. This implies $x_i = 1$ and 
    \[x_i \conc(f_{-i},[0,1]^{n-1})\left(x_{-i}/x_i\right) = f_{-i}(x_{-i}) =  f(x).\]
    where the first equality follows from the same argument in the proof of Lemma \ref{lemma:concinRi} (i.e., the tie point used in $f_{-i}$ is smaller than $\hat z$). 

    If we take $x\in \cl(R_l) \cap \cl(R_i)$, with $x\neq 0$, we have
    \( w^\top x/\|x\|_\infty + b  = \hat{z} \)
    and thus $x/\|x\|_\infty \in \cl(R_l) \cap \cl(R_i) $. 
    Therefore, the definition of $g$ on $R_l$ evaluated on $x/\|x\|_\infty$ is
    \[\frac{\sigma(\hat{z})}{\hat{z} - b} (w^\top x/\|x\|_\infty)  = \sigma(\hat{z}),\]
    and the definition of $g$ on $R_i$ evaluated on $x/\|x\|_\infty$ is
    \[
    \conc(f_{-i},[0,1]^{n-1})\left(x_{-i}/\|x\|_\infty \right) = f_{-i}\left(x_{-i}/\|x\|_\infty \right) = f(x/\|x\|_\infty),
    \]
    where the first equality follows from the same argument in the proof of Lemma \ref{lemma:concinRi}.
    Note that $f(x/\|x\|_\infty) = \sigma(w^\top x/\|x\|_\infty + b) = \sigma(\hat z)$ and thus 
    \[\frac{\sigma(\hat{z})}{\hat{z} - b} (w^\top x/\|x\|_\infty)  = \conc(f_{-i},[0,1]^{n-1})\left(x_{-i}/\|x\|_\infty \right).\]
    We conclude this case by noting that both function definitions on $R_l$ and $R_i$ are linear on the rays, and thus, we can rescale the arguments.

    The last case to consider is $x\in \cl(R_i) \cap \cl(R_j)$ for $i\neq j$. In this case, we must have $x_i = x_j = \|x\|_\infty$, 
    and thus the $i$ and $j$ components of $x/\|x\|_\infty$ are 1. In what follows we use Lemma \ref{cor:envinfaces} twice:
    \begin{align*}
        \conc(f_{-i},[0,1]^{n-1})\left(x_{-i}/\|x\|_\infty \right) &= \conc((f_{-i})_{-j},[0,1]^{n-2})\left((x_{-i})_{-j}/\|x\|_\infty \right)\\
        &= \conc((f_{-j})_{-i},[0,1]^{n-2})\left((x_{-j})_{-i}/\|x\|_\infty \right)\\
        &= \conc(f_{-j},[0,1]^{n-1})\left(x_{-j}/\|x\|_\infty \right).
    \end{align*}
    Since $x_i=x_j$, we conclude the result.
\end{proof}
\begin{lemma}\label{lemma:goverest}
Consider $g$ as in Definition \ref{def:g}.
Then, $g$ is an overestimator of $f$.
\end{lemma}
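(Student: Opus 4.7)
The plan is to verify the inequality $g(x) \geq f(x)$ piece by piece on the regions $R_f$, $R_l$, and $R_i$ for $i=1,\ldots, n$. On $R_f$ this is immediate since $g=f$ there by definition.

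On $R_l$, the key observation is that, under the normalization $\sigma(b)=0$, the formula defining $g$ on $R_l$ coincides exactly with the formula defining $h$ on $[0,1]^n \setminus R_f$ from Lemma \ref{lemma:mccormick}. Combined with $g=f=h$ on $R_f$, this yields $g=h$ on $R_f \cup R_l$, so Lemma \ref{lemma:mccormick} directly gives $g \geq f$ on this portion of the domain.

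The real work is on $R_i$. Since $\conc(f_{-i},[0,1]^{n-1}) \geq f_{-i}$, it suffices to show that $x_i\, f_{-i}(x_{-i}/x_i) \geq f(x)$. First, I would note that $x_i > 0$ on $R_i$: the defining inequality $w^\top x + b\|x\|_\infty < \hat z \|x\|_\infty$ fails at $x=0$, so necessarily $x_i = \|x\|_\infty > 0$. Setting $t = x_i \in (0,1]$ and $s = w_{-i}^\top (x_{-i}/x_i) + w_i > 0$, a direct computation shows $w^\top x + b = ts + b$ and reduces the claim to the one-dimensional inequality $t\sigma(s+b) \geq \sigma(ts+b)$. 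The defining inequality of $R_i$ is equivalent to $s + b < \hat z$. Hence, by case 2 of Lemma \ref{lemma:onedprops} applied on the interval $[b, s+b] \subseteq [b,\hat z]$, the concave envelope of $\sigma$ over $[b, s+b]$ is the secant from $(b,0)$ to $(s+b, \sigma(s+b))$, and evaluating this secant at $ts+b$ yields exactly $t\sigma(s+b)$. The upper-bounding property of the envelope then provides the desired bound on $\sigma(ts+b)$.

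The main subtlety (rather than a serious obstacle) is recognizing that the algebraic form of the defining inequality of $R_i$, namely $w^\top x + b\|x\|_\infty < \hat z \|x\|_\infty$, is precisely what is needed to place $s+b$ below the global tie point $\hat z$, which in turn is what activates case 2 of Lemma \ref{lemma:onedprops}. This is the only place where the specific shape of $R_i$ is used, and it is what makes the recursive perspective construction a valid overestimator even though the underlying $\sigma$ is not assumed concave.
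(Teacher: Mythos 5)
Your proof is correct and follows essentially the same route as the paper's: reduce the claim on $R_i$ to the one-dimensional secant inequality $t\,\sigma(s+b)\geq\sigma(ts+b)$ along the ray through $x/x_i$, which is exactly where the paper invokes case 2 of Lemma \ref{lemma:onedprops} using the fact that the defining inequality of $R_i$ places $s+b=w^\top x/\|x\|_\infty+b$ strictly below the tie point $\hat z$. The only cosmetic difference is that the paper phrases this via positive homogeneity of the perspective along the ray $\lambda\mapsto\lambda x/\|x\|_\infty$, while you substitute $\conc(f_{-i},[0,1]^{n-1})\geq f_{-i}$ first and compute directly.
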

\begin{proof}
    In $R_l\cup R_f$ it follows since $g(x) = h(x)$ in these regions. 
    Let us now consider $x\in R_i$ fixed and let $\lambda \geq 0$. Since in $R_i$ the function $g$ is defined as the perspective of $\conc(f_{-i},[0,1]^{n-1})$, we have 
    \(g(\lambda x) = \lambda g(x).\)
    Moreover, $g(0) = 0 = f(0)$ and $g(x/\|x\|_\infty) = \conc(f_{-i},[0,1]^{n-1})\left(x_{-i}/\|x\|_\infty\right) \geq f(x/\|x\|_\infty)$, therefore, $\alpha(\lambda) := g(\lambda x/\|x\|_\infty)$ defines an overestimator of the secant $\beta(\lambda) = \lambda f(x/\|x\|_\infty)$ for $\lambda\in[0, 1]$.
    We conclude by noting that the one-dimensional function 
    \[\gamma(\lambda) := f(\lambda x/\|x\|_\infty) = \sigma(\lambda w^\top x/\|x\|_\infty + b)\]
    satisfies the STFE property since $w^\top x/\|x\|_\infty > 0$. 
    The argument of $\sigma$ ranges in $[b,w^\top x/ \|x\|_\infty + b]$, and $w^\top x /\|x\|_\infty + b < \hat{z}$ since $x\in R_i$. Thus, by Lemma \ref{lemma:onedprops} (case 2), $\conc(\gamma,[0,1])(\lambda) = \lambda \gamma(1)$. In particular,
    \begin{align*}
        \underbrace{f(\lambda x / \|x\|_\infty)}_{\gamma(\lambda)} &\leq \underbrace{\lambda f(x/\|x\|_\infty)}_{\lambda \gamma(1) = \beta(\lambda)} \leq \underbrace{g(\lambda x/\|x\|_\infty)}_{\alpha(\lambda)}
    \end{align*}
    for every $\lambda\in [0,1]$.
    The results follows taking $\lambda = \|x\|_\infty \leq 1$.
\end{proof}

\vskip .5cm
\begin{lemma}\label{lemma:envelopeinregion}
    The function $g$ defined as in Definition \ref{def:g} is concave in each region. Moreover, $g$ is the concave envelope of each region separately.
\end{lemma}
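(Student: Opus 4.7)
My plan is to handle the three region types --- $R_f$, $R_l$, and $R_i$ --- separately, in each case showing (i) that $g$ is concave on the closure of the region and (ii) that any concave majorant $\phi$ of $f$ on the region dominates $g$ pointwise. For the first two regions the arguments extend tools already developed. On $R_f$ the argument $w^\top x + b$ lies in $[\hat z, w^\top \mathbf{1} + b]$, where STFE forces $\sigma$ to be concave, so $g = f$ is its own concave envelope. On $R_l$ the function $g$ is affine, and I would mimic the proof of Lemma \ref{lemma:htight}: for $x \in R_l \setminus \{0\}$, setting $\lambda = (\hat z - b)/(w^\top x)$ gives $\lambda \geq 1$ and $\lambda x \in \cl(R_l) \cap \cl(R_f)$ (the defining inequality of $R_l$ ensures $\lambda x \in [0,1]^n$). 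The decomposition $x = (1/\lambda)(\lambda x) + (1 - 1/\lambda)\cdot 0$ then lets one bound any concave majorant by $g(x)$ using $g(\lambda x) = f(\lambda x) = \sigma(\hat z)$ and $g(0) = f(0) = 0$.

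The core of the argument is the region $R_i$. Here $g$ is the perspective of the concave function $\conc(f_{-i}, [0,1]^{n-1})$, so concavity on $\cl(R_i)$ is automatic. The strategy for minimality is to pick any $x \in R_i$ and write $x = x_i\,\tilde x + (1 - x_i)\cdot 0$, where $\tilde x = x/x_i$; this is well-defined because $x_i = \|x\|_\infty > 0$ on $R_i$, and it is a genuine convex combination since $x_i \in [0,1]$. Two structural facts make this decomposition useful: first, $0 \in \cl(R_i)$ whenever $R_i \neq \emptyset$ (the approach along $\varepsilon e_i$ is feasible by Lemma \ref{lemma:eiinRi}); second, the face $F_i := \cl(R_i)\cap\{x_i = 1\}$, which contains $\tilde x$, projects along the $-i$ coordinates onto the set
\[ D' = [0,1]^{n-1}\cap\{x_{-i}: w_{-i}^\top x_{-i} + w_i + b \leq \hat z\} \]
appearing in Lemma \ref{lemma:concinRi}, and $\tilde x_{-i} = x_{-i}/x_i$ lies in $D'$ by the defining inequality of $R_i$.

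Given any concave majorant $\phi$ of $f$ on $\cl(R_i)$, its restriction to $F_i$ is a concave majorant of $f_{-i}$ on $D'$, hence $\phi(\tilde x) \geq \conc(f_{-i}, D')(x_{-i}/x_i)$. Lemma \ref{lemma:concinRi} identifies this with $\conc(f_{-i},[0,1]^{n-1})(x_{-i}/x_i)$. Combining with $\phi(0) \geq f(0) = 0$ and concavity of $\phi$ gives
\[ \phi(x) \geq x_i\phi(\tilde x) + (1 - x_i)\phi(0) \geq x_i\conc(f_{-i},[0,1]^{n-1})(x_{-i}/x_i) = g(x), \]
which is the required inequality.

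The $R_i$ case is what I expect to be the main obstacle, and within it the delicate point is the identification of the envelope of $f$ on the \emph{partial} face $F_i$ with the full-face envelope $\conc(f_{-i},[0,1]^{n-1})$ at $\tilde x_{-i}$. This is exactly the content of Lemma \ref{lemma:concinRi}, but its application hinges on the careful translation of $R_i$'s defining inequality $w^\top x + b\|x\|_\infty < \hat z\|x\|_\infty$ into the half-space constraint defining $D'$ after setting $x_i = 1$, together with confirming $0 \in \cl(R_i)$ via Lemma \ref{lemma:eiinRi}. Once those are in hand, the perspective structure of $g$ on $R_i$ does the remaining work.
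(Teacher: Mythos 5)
Your proof is correct and follows essentially the same route as the paper's: the same case split over $R_f$, $R_l$, and $R_i$, the same ray/secant argument of Lemma \ref{lemma:htight} on $R_l$, and on $R_i$ the same decomposition $x = x_i(x/x_i) + (1-x_i)\cdot 0$ combined with Lemma \ref{lemma:concinRi} to pass from the partial face to the full-face envelope (you merely quantify over an arbitrary concave majorant $\phi$ where the paper writes the chain directly with $\conc(f,R_i)$, and you are slightly more explicit about $0\in\cl(R_i)$). The one thing to add is that your items (i) and (ii) only yield $\conc(f,R_i)\geq g$; to conclude equality you must also invoke that $g\geq f$ on each region, which is exactly the overestimator lemma immediately preceding this one in the paper.
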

\begin{proof}
For regions $R_l$ and $R_f$, the result is direct.
    For $R_i$,
        \begin{align*}
        g(x) 
        =&\, x_i \conc(f, [0,1]^{n} \cap R_i \cap \{x_i = 1\}) (x/x_i) && \text{(Lemma \ref{lemma:concinRi})}\\
        \leq&\, x_i \conc(f, R_i) (x/x_i) + (1-x_i) \conc(f, R_i) (0) && \text{($\conc(f, R_i) (0) \geq  0$)}\\
        \leq &\, \conc(f, R_i) (x).
        \end{align*}
        Since $g$ is also a concave overestimator (Lemma \ref{lemma:goverest}), it must be the concave envelope.
\end{proof}
Finally, we state a lemma that follows directly from the definition of $g$.

\begin{lemma} \label{lemma:linearorf}
For every $x\in [0,1]^n$, either (i) $g(x) = f(x)$, or
(ii) $g(x)$ is linear on the segment $[0,x]$.
\end{lemma}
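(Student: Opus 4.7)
The claim has a case-by-case flavor that mirrors the piecewise definition of $g$, so my plan is to fix $x \in [0,1]^n$, identify which region $R_f$, $R_l$, or $R_i$ it lies in, and conclude accordingly. The trivial case is $x \in R_f$, where $g(x) = f(x)$ by definition, giving alternative 1 directly. All remaining work concerns showing that in $R_l$ and in each $R_i$, the segment $[0,x]$ stays in that region (or its closure with $0$), and that $g$ is already linear along such rays from the origin.

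For $x \in R_l$, the plan is to verify that $\lambda x \in R_l \cup \{0\}$ for every $\lambda \in [0,1]$. Assuming $R_l \neq \emptyset$ forces $b < \hat z$, so $w^\top(\lambda x) + b = \lambda w^\top x + b < \hat z$ whenever $w^\top x + b < \hat z$. The other defining inequality $w^\top x + b \|x\|_\infty \geq \hat z \|x\|_\infty$ scales homogeneously under $x \mapsto \lambda x$, so it is preserved. Since $g$ on $R_l$ has the formula $\frac{\sigma(\hat z)}{\hat z - b}(w^\top x)$ and $g(0) = 0$ (which follows from Lemma \ref{lemma:gcontinuous} together with the normalization $\sigma(b) = 0$), the function $g$ coincides along $[0,x]$ with a single linear map sending $0$ to $0$, so alternative 2 holds.

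For $x \in R_i$, I would again check that $\lambda x \in R_i$ for $\lambda \in (0,1]$: the inequality $w^\top x + b\|x\|_\infty < \hat z \|x\|_\infty$ is homogeneous of degree one in $x$ (once both sides are divided by $\lambda$), and the componentwise conditions $x_i > x_j$, $x_i \geq x_j$ are preserved by positive scaling. The formula $g(y) = y_i \conc(f_{-i},[0,1]^{n-1})(y_{-i}/y_i)$ is a perspective function, hence positively homogeneous: $g(\lambda x) = \lambda g(x)$ for all $\lambda \in (0,1]$. Combined with $g(0) = 0$ from continuity, this makes $g$ linear on the whole segment $[0,x]$, which gives alternative 2.

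I do not anticipate a serious obstacle here; the only subtlety is being careful that the point $0$ itself need not belong to $R_l$ or $R_i$ in the strict sense, so the "linearity on $[0,x]$" statement must be assembled from (i) positive-homogeneity or affinity on the open ray and (ii) continuity of $g$ at $0$ with value $0$, which is exactly what Lemma \ref{lemma:gcontinuous} and the normalization $\sigma(b) = 0$ provide. Once those two ingredients are in hand, the three cases slot together and exhaust $[0,1]^n$ because $R_f, R_l, R_1, \ldots, R_n$ partition the cube.
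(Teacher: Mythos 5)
Your proposal is correct and matches the paper's (unstated) reasoning: the paper dismisses this lemma as following "directly from the definition of $g$," and your case analysis over $R_f$, $R_l$, and $R_i$ — using that the latter two regions are preserved under positive scaling toward the origin, that $g$ is linear on $R_l$ and positively homogeneous on $R_i$ as a perspective function, and that $g(0)=0$ under the normalization $\sigma(b)=0$ — is precisely the verification being taken for granted.
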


With these results, we are only missing the global concavity of $g$.
As mentioned earlier, we prove this by induction on the dimension $n$. The base case of $n=1$ is easy, and we omit it for brevity; it essentially relies on the fact that $R_1 =\emptyset$ when $n=1$.
In the next subsection, we focus on the inductive step.

\subsection{Global concavity: Inductive step}

Let us assume that for dimension $\leq n-1$, the concave envelope of a function $\sigma(w^\top x + b)$ can be expressed using $g$ as in Definition \ref{def:g}.
To prove the concavity of $g$ in $n$ dimensions, it suffices to consider $x\in (0,1)^n$ and show that for any $y$ in a dense subset of $[0,1]^n$ (to be determined), 
\begin{equation}\label{eq:gradientineq}
    f(x) \leq g(y) + \nabla g(y)^{\top}(x-y).
\end{equation}

Why does this suffice? For each fixed $y$, the right-hand side of \eqref{eq:gradientineq} defines a linear overestimator of $f$, and since $g$ is the concave envelope in each separate region (Lemma \ref{lemma:envelopeinregion}), we get $g(x) \leq g(y) + \nabla g(y)^{\top}(x-y)$ for every $x\in (0,1)^n$ and $y\in D'$, where $D'$ is a dense subset of $[0,1]^n$.

Next, we take $x,z\in (0,1)^n$ arbitrary and let $y := \lambda x + (1-\lambda) z$ for $\lambda \in (0,1)$. By density of $D'$, we can take a sequence $\{y_k\}_{k=1}^\infty \subseteq D'$ such that $y_k \to y$. With $x_k := x+(y_k -y)$ and $z_k := z + (y_k - y)$ we get $y_k = \lambda x_k + (1-\lambda) z_k$. Using $y_k \in D'$ and the discussion of the previous paragraph, 
\[g(x_k) \leq g(y_k) + \nabla g(y_k)^{\top}(x_k-y_k) \quad \land \quad g(z_k) \leq g(y_k) + \nabla g(y_k)^{\top}(z_k-y_k).\]

Combining these inequalities with multipliers $\lambda$ and $(1-\lambda)$, we obtain $\lambda g(x_k) + (1-\lambda) g(z_k) \leq g(y_k) = g(\lambda x_k + (1-\lambda) z_k)$. Taking limits and using that $g$ is continuous (Lemma \ref{lemma:gcontinuous}), we conclude the global concavity of $g$.\\

Henceforth, we focus on \eqref{eq:gradientineq}; we begin by reducing some cases. If $x,y$ are in the same region, it follows by concavity of $g$ in each region (Lemma \ref{lemma:envelopeinregion}). If $y\in R_l \cup R_f$, \eqref{eq:gradientineq} follows since $g(y) = h(y)$ and $h$ is a concave overestimator.
Therefore, we focus on $y\in R_i$ for some $R_i$: moreover, we can consider $y\in \inte(R_i)$ by density.
The last case we can discard is when $x\in R_f$: by the proof of Lemma \ref{lemma:gcontinuous}, we see that $\cl(R_i)\cap \cl(R_f)$ is on the boundary of $[0,1]^n$.
Thus, it suffices to show \eqref{eq:gradientineq} for $y$ in a dense subset of $\inte(R_i)$ and $x\not\in (R_i\cup R_f)$. 

Before showing the proof, let us describe the high-level strategy. Consider Figure \ref{fig:proofidea}; here, we display two possible ``locations'' for $y$ within $R_i$, which will become clearer next. We remark that these figures follow the same parameters as in Example \ref{ex:main}, and thus the reader can also refer to the previous figures.
We first find a direction $\tilde w$ that is orthogonal to $w$ such that $x+\alpha \tilde w \in R_i$ for some $\alpha$. This orthogonality ensures that the function $f$ stays constant. Then, we can use the concavity of $g$ over $R_i$ to overestimate $f(x+\alpha \tilde w)$ with a gradient on $y$. This gradient-based overestimation could potentially not overestimate $f(x)$, so the next step is to prove that it indeed works.

\begin{figure}[t]
    \centering
    \begin{subfigure}{0.4\textwidth}
        \centering
        \begin{overpic}[abs,unit=1mm,width=0.82\textwidth]{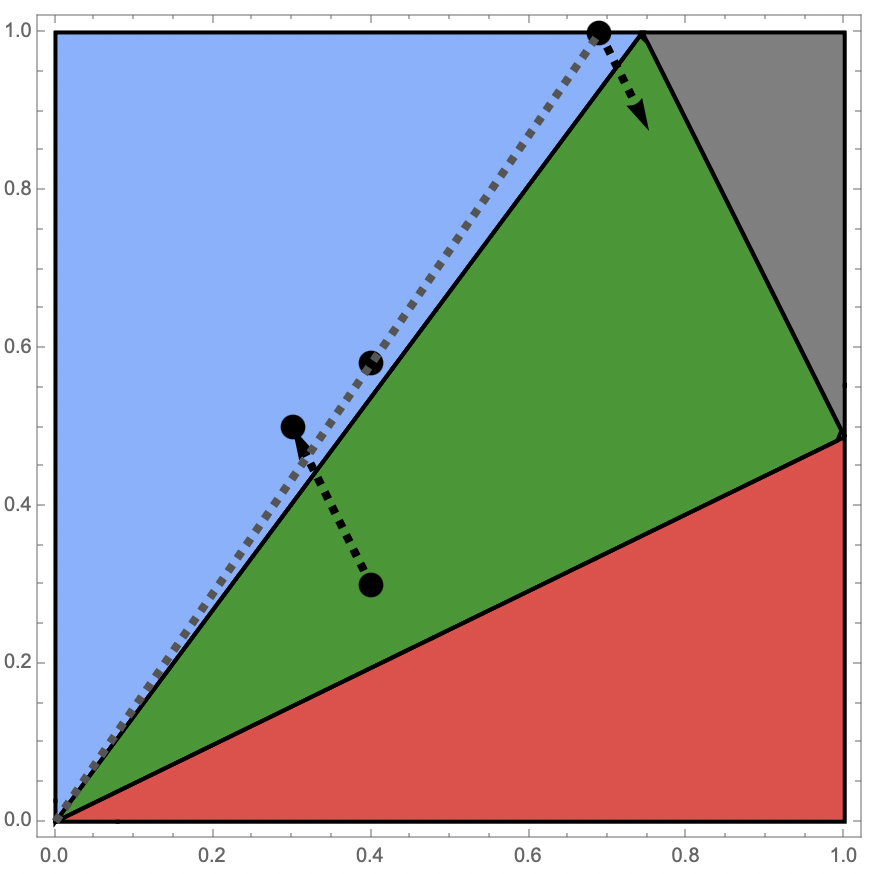}
\put(17,11){\color{black}\large$x$}
\put(3.5,12){\rotatebox{55}{\large$x+\alpha \tilde{w}$}}
\put(17,20){\color{black}\large$y$}
\put(19,34){\color{black}\large$y/y_i$}
\end{overpic}
        \caption{Illustration in the case $y/y_i$ is such that $g(y/y_i) = f(y/y_i)$.}
    \end{subfigure}%
    \hspace{1cm}
    \begin{subfigure}{0.4\textwidth}
        \centering
        \begin{overpic}[abs,unit=1mm,width=0.82\textwidth]{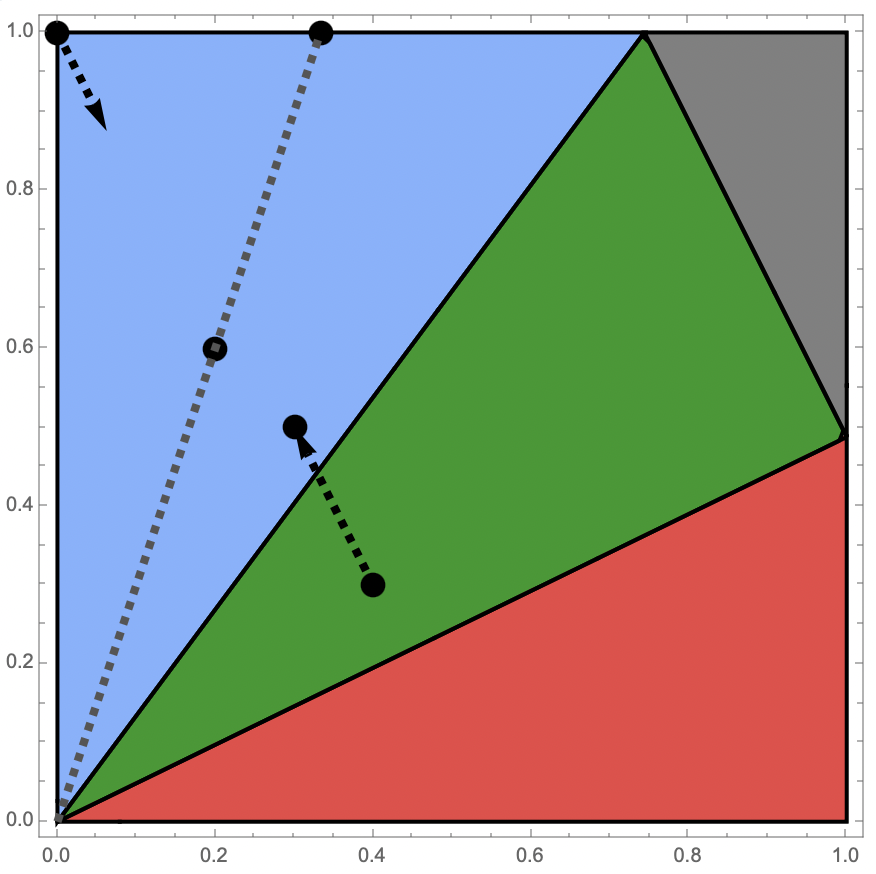}
\put(17,11){\color{black}\large$x$}
\put(12,20){\rotatebox{55}{\large$x+\alpha \tilde{w}$}}
\put(7,25){\color{black}\large$y$}
\put(14,34){\color{black}\large$y/y_i$}
\put(-2,35){\color{black}\large$e_i$}
\end{overpic}
        \caption{Illustration in the case $y/y_i$ is such that $g(y/y_i) \neq f(y/y_i)$.}
    \end{subfigure}
    \caption{Proof strategy for concavity of $g$. $R_i$ is the light blue region. In both cases, the dashed arrows point in opposite directions and are orthogonal to $w$.} \label{fig:proofidea}
\end{figure}

Let us first show that such a $\tilde w$ always exists. While in two dimensions it is fairly simple to see, in higher dimensions, one needs to be more careful.

\vskip .5cm
\begin{lemma}\label{lemma:omegatilde}
Let $\tilde{y} \in R_i$ for some $i$ such that $\tilde{y}_i = 1$ and $\tilde{y}_j < 1$ for $j\neq i$, and $x\in (0,1)^n \setminus (R_f \cup R_i)$. There exists $\tilde{w}\in \mathbb{R}^n$ such that
(i) $\tilde{w}^\top w = 0$,
(ii) $x+\alpha \tilde w \in R_i$ for some $\alpha \geq 0$, and
(iii) $\tilde{y} - \varepsilon \tilde w \in R_i$ for some $\varepsilon > 0$.
\end{lemma}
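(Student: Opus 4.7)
The plan is to take $\tilde w := x'' - x$ for a carefully chosen $x'' \in R_i$ satisfying $w^\top x'' = w^\top x$; this handles condition 1 at once and handles condition 2 with $\alpha = 1$. The delicate difficulty is to additionally arrange $x''_i \geq x_i$, which is what allows the perturbation $\tilde y - \varepsilon \tilde w$ in condition 3 to stay inside the hypercube.

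First, I would observe that $R_i$ is a convex set, because within $R_i$ we have $\|x\|_\infty = x_i$ and then every defining inequality is linear. Combined with Lemma \ref{lemma:eiinRi}, this says that $e_i \in R_i$ and $\hat z - b > w_i > 0$. Next, I would show that the linear functional $\phi(z) := w^\top z$ maps $R_i$ onto the open interval $(0, \hat z - b)$: the upper bound $\phi(z) < (\hat z - b)\|z\|_\infty \leq \hat z - b$ and the strict positivity $\phi(z) > 0$ follow from the definition of $R_i$ together with $z_i > 0$ and $w > 0$; moreover both endpoints are approached by simple one-parameter families in $R_i$ (e.g.\ $\lambda e_i$ with $\lambda \downarrow 0$, and $e_i$ with small mass added to the remaining coordinates to push $w^\top z$ arbitrarily close to $\hat z - b$). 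Convexity and hence connectedness of $R_i$ then upgrade this to $\phi(R_i) = (0,\hat z - b)$.

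Since $x \in (0,1)^n \setminus R_f$ gives $c := w^\top x \in (0, \hat z - b)$, the previous step already provides some $x'' \in R_i$ with $w^\top x'' = c$. To additionally enforce $x''_i \geq x_i$, I would give an explicit construction in two cases. If $c \leq w_i$, set $x'' := (c/w_i)\, e_i$, so that $x''_i = c/w_i \geq x_i$ by the componentwise bound $c = \sum_j w_j x_j \geq w_i x_i$. If $c > w_i$, set $x''_i := 1$ and $x''_j := (c - w_i)/\sum_{k \neq i} w_k$ for $j \neq i$, which lies in $[0,1)$ because $c < \hat z - b \leq w^\top \mathbf{1}$. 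In both cases, a short check using $w_i + b < \hat z$ confirms $x'' \in R_i$.

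Finally, with $\tilde w := x'' - x$, I would verify condition 3 by checking that every defining inequality of $R_i$ holds at $\tilde y - \varepsilon \tilde w$ for all sufficiently small $\varepsilon > 0$. The strict coordinate comparisons at $\tilde y$ (namely $1 = \tilde y_i > \tilde y_j$) and the strict inequality $w^\top \tilde y + b\|\tilde y\|_\infty < \hat z\|\tilde y\|_\infty$ are all preserved by continuity, using crucially that $w^\top \tilde w = 0$ so the left-hand side of the last inequality is constant along the perturbation. The only inequality which is not strict at $\tilde y$ is the upper box bound $\tilde y_i \leq 1$, and here $(\tilde y - \varepsilon \tilde w)_i = 1 - \varepsilon(x''_i - x_i) \leq 1$ precisely because $x''_i \geq x_i$. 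The main obstacle is this simultaneous control of $w^\top x''$ and $x''_i$, which is exactly what forces the two-case construction in the third step.
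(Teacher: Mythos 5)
Your overall strategy is the same as the paper's: pick a target point in $R_i$ on the same level set of $w^\top(\cdot)$ as $x$, set $\tilde w$ to be the difference, and split into cases according to whether $c := w^\top x$ is at most $w_i$ or exceeds it. Your Case 1 ($c\le w_i$, target $(c/w_i)e_i$) is exactly the paper's Case 1 and is correct. The problem is in your Case 2, in the verification of condition 3. You assert that the only inequality defining $R_i$ that is tight at $\tilde y$ is the upper box bound $\tilde y_i\le 1$, but the lower box bounds $\tilde y_j\ge 0$ may also be tight: the lemma only assumes $\tilde y_j<1$ for $j\ne i$, and indeed the main concavity proof invokes this lemma with $\tilde y=e_i$, where every off-$i$ coordinate is $0$. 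With your equal-spread target $x''_j=(c-w_i)/\sum_{k\ne i}w_k$ one can have $x''_j>x_j$ for some $j$ (e.g.\ $w=(1,1,1)$, $i=1$, $x=(0.5,0.9,0.1)$ gives $x''_3=0.25>0.1=x_3$), and then $(\tilde y-\varepsilon\tilde w)_j=\tilde y_j-\varepsilon(x''_j-x_j)<0$ whenever $\tilde y_j=0$, so the perturbed point leaves $[0,1]^n$ and condition 3 fails for your $\tilde w$.

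The fix is to control the sign of \emph{every} off-$i$ coordinate of $\tilde w$, not just the $i$-th one. The paper does this by always aiming at the ray through $e_i$: it sets $\tilde w=\lambda e_i-x$ with $\lambda=c/w_i$, so that $\tilde w_j=-x_j<0$ for all $j\ne i$ and hence $(\tilde y-\varepsilon\tilde w)_j=\tilde y_j+\varepsilon x_j\ge 0$ automatically; in the case $\lambda>1$ (your Case 2) the point $\lambda e_i$ leaves the box, so the paper takes $\alpha=(1-x_i)/(\lambda-x_i)\in(0,1)$ rather than $\alpha=1$, landing at the point with $i$-th coordinate $1$ and $j$-th coordinates $(1-\alpha)x_j<x_j$. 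Replacing your equal-spread $x''$ by this proportional shrinkage (equivalently, keeping your $\tilde w$ direction from Case 1 in both cases and only adjusting $\alpha$) repairs the argument; the rest of your verification, including the use of $w^\top\tilde w=0$ to freeze the membership inequality and the observation that $x''_i\ge x_i$ keeps the $i$-th coordinate below $1$, is sound.
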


\begin{proof}
    If there exists such $\tilde{y}$, then $e_i\in R_i$ by Lemma \ref{lemma:eiinRi}. We proceed with a case distinction.
    \paragraph{Case 1.} If $w^\top e_i \geq w^\top x $, then we consider $\lambda \leq 1$ such that $w^\top (\lambda e_i) = w^\top x $ and define
        \( \tilde w = \lambda e_i - x\)
        and $\alpha = 1$. Then, $\tilde{w}^\top w = 0$ by construction and since $w^\top x + b < \hat{z}$ we have that $x + \alpha \tilde w = \lambda e_i \in R_i$.

        Let us show $\tilde{y} - \varepsilon \tilde w \in R_i$. Note that 
        \(w_ix_i < w^\top x = \lambda w_i \, \Rightarrow x_i < \lambda. \) 
        The $i$-th component of $\tilde{y} - \varepsilon \tilde w$ is
        \(1-\varepsilon (\lambda - x_i) \)
        and the rest is
        \(\tilde{y}_j + \varepsilon x_j. \)
        Since $\lambda > x_i$ and $\tilde{y}_j < 1$ for $j\neq i$, we can ensure that for $\varepsilon$ small $\tilde{y} - \varepsilon \tilde w \in [0,1]^n$, and that the $i$-th component is strictly larger than the rest. Finally,
        \[w^\top \frac{(\tilde{y} - \varepsilon \tilde{w})}{1-\varepsilon (\lambda - x_i)}  -  (\hat{z} - b) \xrightarrow{\varepsilon \to 0}  w^\top \tilde{y} - (\hat{z} - b) < 0.\]
        Thus, for $\varepsilon$ small enough, 
        \(w^\top (\tilde{y} - \varepsilon \tilde{w})/\|\tilde{y} - \varepsilon \tilde{w}\|_\infty  <  \hat{z} - b.\)
        We conclude $\tilde{y} - \varepsilon \tilde{w} \in R_i$.

       \paragraph{Case 2.} If $w^\top e_i < w^\top x $, then we consider $\lambda > 1$ such that $w^\top (\lambda e_i) = w^\top x $ and, as before, define
        \( \tilde w = \lambda e_i - x.\)
        Take
        \(\alpha = (1-x_i)/(\lambda - x_i).\)
        Since $x_i < 1 < \lambda$, $\alpha\in (0,1)$.
        In this case, $\tilde{w}^\top w = 0$ again follows by construction. To show that $x + \alpha \tilde{w}\in R_i$, note that
        \(
            (x + \alpha \tilde{w})_i = 1
            \)
            and for $j\neq i$,
            \(
            (x + \alpha \tilde{w})_j = (1-\alpha) x_j.
        \)
        Therefore, $x+ \alpha \tilde w \in [0,1]^n$, $\|x + \alpha \tilde{w}\|_\infty = 1$, and  
        \(w^\top (x + \alpha \tilde{w}) = w^\top x < \hat{z} - b.\)
        Thus, $x + \alpha \tilde{w} \in R_i$.

        Finally, and similarly to the previous case, the $i$-th component of $\tilde{y} - \varepsilon \tilde{w}$ is
        \(1-\varepsilon (\lambda - x_i) \)
        and the rest is
        \(\tilde{y}_j + \varepsilon x_j. \)
        In this case, we also have $\lambda > x_i$, and since $\tilde{y}_j < 1$ for $j\neq i$, we can ensure that for $\varepsilon$ small, the $i$-th component of $\tilde{y} - \varepsilon \tilde w$ is the largest. Finally,
        \[w^\top \frac{(\tilde{y} - \varepsilon \tilde{w})}{1-\varepsilon (\lambda - x_i)}  -  (\hat{z} - b) \xrightarrow{\varepsilon \to 0}  w^\top \tilde{y} - (\hat{z} - b) < 0.\]
        Thus, for $\varepsilon$ small enough, 
        \(w^\top (\tilde{y} - \varepsilon \tilde{w})/\|\tilde{y} - \varepsilon \tilde{w}\|_\infty  <  \hat{z} - b. \)
        i.e., $\tilde{y} - \varepsilon \tilde{w} \in R_i$.
\end{proof}

The last piece of information we need for establishing concavity is the gradient of $g$ on the interior of a region $R_i$. Let us define
\[g_{ave,-i} = \conc(f_{-i},[0,1]^{n-1}) \]
Assuming $g_{ave,-i}$ is differentiable in $x_{-i}/x_i$:
\begin{subequations}\label{eq:gradientofg}
\begin{align}
\frac{\partial }{\partial x_j} g(x) &= \frac{\partial}{\partial x_j} g_{ave,-i}(x_{-i}/x_i) && j\neq i\\
\frac{\partial }{\partial x_i} g(x) &= g_{ave,-i}(x_{-i}/x_i) - \nabla g_{ave,-i}(x_{-i}/x_i)^\top \left( \frac{x_{-i}}{x_i} \right) 
\end{align}
\end{subequations}

\begin{lemma}
Inequality \eqref{eq:gradientineq} holds for every $y$ in a dense subset of $R_i$ and every $x \in (0,1)^n \setminus (R_i\cup R_f)$.
\end{lemma}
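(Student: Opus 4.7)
Fix $y$ in a dense subset of $\inte(R_i)$ at which $g$ is differentiable and at which the inductive hypothesis can be applied, and set $\tilde y := y/y_i$. Because $y \in \inte(R_i)$, the vector $\tilde y$ lies on the face $\{x_i = 1\}$ still inside $R_i$, with $\tilde y_j < 1$ for $j \ne i$. The strategy is to transport $x$ into $R_i$ along a direction $\tilde w$ orthogonal to $w$ so that $f$ is preserved, use the concavity of $g$ on $R_i$ to land a gradient inequality centered at $y$, and then control the surplus term via the orthogonality.

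\emph{Step 1 (reduction to a gradient sign).} Apply Lemma \ref{lemma:omegatilde} with the pair $(\tilde y, x)$ to obtain $\tilde w$ with $\tilde w^\top w = 0$ and $\alpha \ge 0$ such that $x + \alpha \tilde w \in R_i$. Orthogonality forces $f(x + \alpha \tilde w) = f(x)$, and concavity of $g$ on $R_i$ (Lemma \ref{lemma:envelopeinregion}) combined with the overestimation property yields
\[
f(x) \,=\, f(x + \alpha \tilde w) \,\le\, g(x + \alpha \tilde w) \,\le\, g(y) + \nabla g(y)^\top (x - y) + \alpha\,\nabla g(y)^\top \tilde w.
\]
Thus \eqref{eq:gradientineq} is equivalent to proving $\nabla g(y)^\top \tilde w \le 0$.

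\emph{Step 2 (sign analysis via the perspective).} Using the perspective gradient formulas stated just before the lemma, I expand $\nabla g(y)^\top \tilde w$ in terms of $g_{ave,-i}(\tilde y_{-i})$, $\nabla g_{ave,-i}(\tilde y_{-i})$, and the coordinates of $\tilde w$, where $g_{ave,-i} = \conc(f_{-i},[0,1]^{n-1})$. By the inductive hypothesis, $g_{ave,-i}$ satisfies Definition \ref{def:g}, so Lemma \ref{lemma:linearorf} applies at $\tilde y_{-i}$ and splits the argument into two cases. In \emph{Case A}, $g_{ave,-i}(\tilde y_{-i}) = f_{-i}(\tilde y_{-i})$: the concave overestimator touches $f_{-i}$ there, which forces $\nabla g_{ave,-i}(\tilde y_{-i}) = \nabla f_{-i}(\tilde y_{-i}) = \sigma'(w^\top \tilde y + b)\, w_{-i}$ at points of differentiability; substituting and invoking $\tilde w^\top w = 0$ collapses most of the expression, and the residual is controlled using the convexity of $\sigma$ on $(-\infty,\hat z]$ together with $w^\top \tilde y + b < \hat z$ on $R_i$. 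In \emph{Case B}, $g_{ave,-i}$ is linear on $[0,\tilde y_{-i}]$, so $\nabla g_{ave,-i}(\tilde y_{-i})^\top \tilde y_{-i} = g_{ave,-i}(\tilde y_{-i}) - g_{ave,-i}(0)$ and the $i$-th partial of $g$ reduces to $g_{ave,-i}(0) = f(e_i)$; plugging in the explicit form of $\tilde w$ from the proof of Lemma \ref{lemma:omegatilde} and invoking the global McCormick-type overestimator from Lemmas \ref{lemma:mccormick} and \ref{lemma:htight} then recovers the desired sign.

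\emph{Step 3 (density and wrap-up).} With \eqref{eq:gradientineq} established on this dense subset of $\inte(R_i)$, the continuity of $g$ (Lemma \ref{lemma:gcontinuous}) together with per-region concavity (Lemma \ref{lemma:envelopeinregion}) extends it to all of $\inte(R_i)$ via the argument sketched in the paragraph preceding the lemma. I expect the main obstacle to be the algebraic bookkeeping in Step 2, especially Case A: although the orthogonality $\tilde w^\top w = 0$ drives the cancellation of the $\sigma'(w^\top \tilde y + b)\, w_{-i}$ contributions, a careful appeal to the secant-above-function property of $\sigma$ on $[b,\hat z]$ is needed to dispose of the leftover term involving $\tilde w_i \cdot \bigl[\sigma(w^\top \tilde y + b) - \sigma'(w^\top \tilde y + b)\, w^\top \tilde y\bigr]$.
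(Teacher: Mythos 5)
Your Step 1 and the overall skeleton (reduce to $\nabla g(y)^\top \tilde w \le 0$, then split according to Lemma \ref{lemma:linearorf} applied to $g_{ave,-i}$ at $y_{-i}/y_i$) match the paper. The gap is in how you establish the sign in Step 2. In Case A you want to identify $\nabla g_{ave,-i}(\tilde y_{-i})$ with $\sigma'(w^\top \tilde y + b)\,w_{-i}$ and then ``control the residual using the convexity of $\sigma$ on $(-\infty,\hat z]$.'' Two problems. First, $\sigma$ is only assumed continuous; the whole point of the STFE framework is to cover non-differentiable activations (ReLU, ELU, SELU, penalized tanh), and your dense subset only guarantees differentiability of $g_{ave,-i}$, not of $\sigma$, so the identity $\nabla g_{ave,-i} = \nabla f_{-i}$ at a touching point is not available. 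Second, and more fatally, $\sigma$ is \emph{not} convex on $(-\infty,\hat z]$: the tie point $\hat z$ generally lies strictly beyond the inflection point $\tilde z$ of an S-shaped function, and in Case A the tightness of the envelope of $f_{-i}$ forces $w^\top \tilde y + b \ge \hat z^{-i}$, i.e., the relevant argument of $\sigma$ sits in the \emph{concave} part. So the leftover term $\tilde w_i\bigl[\sigma(s) - \sigma'(s)(s-b)\bigr]$ cannot be disposed of by convexity; one would instead need a concavity-plus-secant argument, which you do not supply. Case B is too vague to check: ``plugging in the explicit form of $\tilde w$ and invoking the McCormick-type overestimator'' does not obviously produce the sign, and you never use condition 3 of Lemma \ref{lemma:omegatilde}, which is the load-bearing part of that lemma.

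The paper's proof avoids all of this by never differentiating $\sigma$. It chooses the anchor $\tilde y$ in Lemma \ref{lemma:omegatilde} \emph{differently in each case} ($\tilde y = y/y_i$ in Case 1, $\tilde y = e_i$ in Case 2), precisely so that $g$ is tight against $f$ at $\tilde y$ and so that $\tilde y - \varepsilon\tilde w$ stays in $R_i$. Then the chain $f(\tilde y) = f(\tilde y - \varepsilon\tilde w) \le g(\tilde y - \varepsilon\tilde w) \le g(y) + \nabla g(y)^\top(\tilde y - \varepsilon\tilde w - y)$, combined with linearity of $g$ on rays (Case 1) or the Case-2 linearity identity for $\nabla g(y)^\top(e_i - y)$, collapses to $f(\tilde y) \le f(\tilde y) - \varepsilon\,\nabla g(y)^\top\tilde w$, which gives the sign for free. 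You should restructure Step 2 around this backward perturbation rather than an explicit gradient computation; as written, Case A rests on a false convexity claim and Case B is not a proof.
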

\begin{proof}

 We consider $y \in \inte(R_i)$ such that $g_{ave,-i}$ is differentiable in $y_{-i}/y_i$. This induces a dense subset of $\inte(R_i)$ since $g_{ave,-i}$ is differentiable almost everywhere.
 Moreover, $g$ is differentiable in $y$.
 In this case, $1 > y_i > y_j > 0$ for every $j\neq i$. We take $\tilde \omega$ as in Lemma \ref{lemma:omegatilde} (we leave which $\tilde y$ we use ambiguous for now since it depends on a case distinction below). Then, using $w^\top \tilde w = 0$ and that $g$ is a concave overestimator on $R_i$:
    \[
        f(x) = f(x + \alpha \tilde w) \leq g(x + \alpha \tilde w) \leq g(y) + \nabla g(y)^\top (x + \alpha \tilde w - y).
    \]
    Thus, it suffices to show $\nabla g(y)^\top    \tilde w \leq 0$. 
    By the inductive hypothesis, $\conc(f_{-i} - f_{-i}(0),[0,1]^{n-1})$ can be described using a function of the form of $g$. Following the notation preceding this lemma, we have
    \(\conc(f_{-i} - f_{-i}(0),[0,1]^{n-1}) = g_{ave,-i} - f_{-i}(0).\)
    We use Lemma \ref{lemma:linearorf} with $g_{ave,-i} - f_{-i}(0)$ to get that $y_{-i}/y_i$ must be in one of two cases.

    \paragraph{Case 1 in Lemma \ref{lemma:linearorf}.} 
    In this case the envelope is tight at $y_{-i}/y_i$. This means $g_{ave,-i}(y_{-i}/y_i) - f_{-i}(0) = f_{-i}(y_{-i}/y_i) - f_{-i}(0)$.
    Recall that 
        \[ f_{-i}(y_{-i}/y_i) - f_{-i}(0) =  f(y/y_i) - f(e_i).\]
        For this case, we choose $\tilde w$ for $\tilde{y} := y/y_i$ in Lemma \ref{lemma:omegatilde}. With this, we move in $-\tilde w$ from $y/y_i$: this corresponds to the case of Figure \ref{fig:proofidea}(a). This yields,
        \begin{align*}
            f(y/y_i) 
            &\leq g(y/y_i - \varepsilon \tilde w) && \text{($g$ overest. in $R_i$ and $w \perp \tilde w$)}\\
            &\leq g(y) + \nabla g(y)^\top (y/y_i - \varepsilon \tilde w - y) && \text{($g$ concave in $R_i$)}\\
            &= g(y/y_i) - \varepsilon \nabla g(y)^\top \tilde w  && \text{($g$ linear on the rays in $R_i$)} \\
            &= g_{ave,-i}(y_{-i}/y_i) - \varepsilon \nabla g(y)^\top \tilde w  && \text{(definition of $g$)} \\
            &= f_{-i}(y_{-i}/y_i) - \varepsilon \nabla g(y)^\top \tilde w  &&\text{(case definition)} 
        \end{align*}
        Since $f_{-i}(y_{-i}/y_i) = f(y/y_i)$ we obtain $\nabla g(y)^\top \tilde w \leq 0$.

    \paragraph{Case 2 in Lemma \ref{lemma:linearorf}.} Since we are assuming $g_{ave,-i}$ is differentiable in $y_{-i}/y_i$, this case can be stated as  
    \[g_{ave,-i}(y_{-i}/y_i) - f_{-i}(0)= \nabla g_{ave,-i}(y_{-i}/y_i)^\top y_{-i}/y_i.\] 
    Note that in the previous case, we heavily relied on the envelope being tight for $y/y_i$, which may not be the case here. However, since $g_{ave,-i}$ is linear in a ray, we can move linearly to a point that is always tight: $e_i$. 
    Therefore, for this case, we choose $\tilde w$ for $\tilde{y}:= e_i$ in Lemma \ref{lemma:omegatilde} and move in $-\tilde w$: this corresponds to the case of Figure \ref{fig:proofidea}(b). This yields,
    \[
            f(e_i) 
            = f(e_i - \varepsilon \tilde w)  
            \leq g(e_i - \varepsilon \tilde w) 
            \leq g(y) + \nabla g(y)^\top (e_i - \varepsilon \tilde w - y).
     \]
    Let us analyze the product $\nabla g(y)^\top (e_i  - y)$. From \eqref{eq:gradientofg}
\begin{align*}
    \nabla g(y)^\top (e_i  - y) &= -\nabla g_{ave,-i} (y_{-i}/y_i)^\top y_{-i} \\
    & \qquad + (1-y_i)\left(g_{ave,-i}(y_{-i}/y_i) - \nabla g_{ave,-i}(y_{-i}/y_i)^\top \left( \frac{y_{-i}}{y_i} \right) \right)\\
    &=  (1-y_i)g_{ave,-i}(y_{-i}/y_i) -  \nabla g_{ave,-i}(y_{-i}/y_i)^\top \left( \frac{y_{-i}}{y_i} \right) 
\end{align*}
Also recall that $g(y) = y_i g_{ave,-i}(y_{-i}/y_i)$, thus
\begin{align*}
            f( e_i ) 
            &\leq g_{ave,-i}(y_{-i}/y_i) -  \nabla g_{ave,-i}(y_{-i}/y_i)^\top \left( \frac{y_{-i}}{y_i} \right) - \varepsilon \nabla g(y)^\top \tilde w \\
            &= f_{-i}(0) - \varepsilon \nabla g(y)^\top \tilde w = f(e_i) - \varepsilon \nabla g(y)^\top \tilde w \qquad \text{(case definition)}
        \end{align*}
        This implies $ \nabla g(y)^\top \tilde w \leq 0$.
\end{proof}

\newpage

\section*{Acknowledgements}
We would like to thank Felipe Serrano and Juan Pablo Vielma for their helpful feedback on an early version of this article, as well as Joey Huchette, Mohit Tawarmalani, and Calvin Tsay for their valuable comments.
We would also like to thank the two anonymous reviewers for their feedback.
P. Carrasco and G. Muñoz were supported by the National Research and Development Agency of Chile (ANID) through the Fondecyt Grant 1231522.

\bibliography{sn-bibliography}

\newpage

\appendix

\section{Convex and S-shaped activations}\label{app:list}

In Table \ref{table:activations}, we list common convex and S-shaped activations.
The SiLU activation is not technically S-shaped since it is concave \emph{and then} convex. However, we can reflect the domain.

\begin{table}[t]
\centering
\caption{List of common convex and S-shaped activation functions} \label{table:activations}
\begin{tabular}{|l|l| l | l |}
\hline
\textbf{Name} & \textbf{Parameters} & \textbf{Type} & \textbf{Function $\sigma(x)$} \\ \hline
Rectifier Linear Unit (ReLU)   & & Convex & $\max\{0, x\}$       \\ \hline
Leaky ReLU & $0 < \varepsilon < 1$ & Convex &
$\begin{cases} 
x, & \text{if } x > 0 \\
\varepsilon x, & \text{if} x\leq 0 
\end{cases}$ \\ \hline
Maxtanh                    & & Convex & $\max\{x, \tanh(x)\}$  \\ \hline
Softplus                   & & Convex & $\log(1 + \exp(x))$  \\ \hline
Exponential Linear Unit (ELU) & $0< \alpha \leq 1 $ & Convex & 
$\begin{cases} 
x, & x > 0 \\
\alpha (\exp(x) - 1), & x \leq 0 
\end{cases}$ \\ \hline
\hline
Softsign                        & & S-shaped & $\displaystyle \frac{x}{1 + |x|}$  \\ \hline
Hyperbolic tangent               & & S-shaped &$\tanh(x)$           \\ \hline
Penalized hyperbolic tangent     & $0<\alpha < 1$ & S-shaped & 
$\begin{cases} 
\tanh(x), & x > 0 \\
\tanh(\alpha x), & x \leq 0 
\end{cases}$ \\ \hline
Sigmoid                         & & S-shaped & $1/(1 + \exp(-x))$ \\ \hline
Bipolar sigmoid                  & & S-shaped & $(1 - \exp(-x))/(1 + \exp(-x))$ \\ \hline
Exponential Linear Unit (ELU) & $ \alpha > 1$ & S-shaped & 
$\begin{cases} 
x, & x > 0 \\
\alpha (\exp(x) - 1), & x \leq 0 
\end{cases}$ \\ \hline
Scaled ELU (SELU) & $\begin{array}{l}\lambda = 1.0507\\ \alpha = 1.67326\end{array}$ & S-shaped & 
$\lambda \begin{cases} 
x, & x > 0 \\
\alpha (\exp(x) - 1), & x \leq 0 
\end{cases}$ \\ \hline
Sigmoid Linear Unit (SiLU) & & S-shaped* & $x/(1+\exp(-x))$ \\ \hline
\end{tabular}
\end{table}

\section{Proof of Lemma \ref{lemma:onedprops}}\label{app:oned}

The cases "Decreasing upper bound, case 1" and "Increasing lower bound" are direct.

For the ``Decreasing upper bound, case 2'', we consider $\tilde U \in [L,\hat z]$. We assume $L<\hat z$; otherwise, the result is trivial.
Let $\tilde z$ be the tie point in $[L, \tilde U]$. If $\tilde z = \tilde U$, we are done, thus we assume $\tilde z < \tilde U$, which implies $\sigma$ is concave in $[\tilde z, \tilde U]$ and the latter has nonempty interior.
Let $[m,M]\supseteq [\tilde z, \tilde U] $ be a maximal interval where $\sigma$ is concave. 
We claim that $M\leq \hat z$. Otherwise, $\sigma$ is concave on $[\tilde z, \hat z + \varepsilon]$ for some $\varepsilon>0$ and we get that

\[
\left\{ \begin{array}{ll}
     \conc(\sigma, [L, \tilde{U}] )(z) & \text{if } z\in [L, \tilde z) \\
     \sigma(z)& \text{if } z\in [ \tilde z, U]
\end{array}\right.
\]
is a valid concave overestimator of $\sigma$ on $[L,U]$, which would imply the tie point in $[L,U]$ is less or equal than $\tilde z < \hat z$. This implies $\sigma$ is not concave in $[M-\delta,M+\varepsilon]$ for any $\delta,\varepsilon>0$.
Additionally, the tie point in $[L, M]$ must be $\tilde z$.

By definition of $M$ and the STFE property, $\conc(\sigma, [m, M+\varepsilon])$ is an affine function in $[m,M]$ (there could potentially be a tie point in $[M,M+\varepsilon]$ since the function could have a concave portion in this interval): this affine function is tight at $m$ and at a point in $[M,M+\varepsilon]$.
Taking $\varepsilon\to 0$, we obtain an affine overestimator $\Lambda(z)$ of $\sigma$ in $[m, M]$ which is tight at the endpoints. Since $\sigma$ is concave in this interval, we conclude that it must be \emph{affine} in $[m,M]$. 
Note that $\Lambda(z)$ must define an overestimator of $\sigma$ on $[L, M]$, since we know that the tie point of $\sigma$ in $[L, M]$ is $\tilde z \in [m, M)$.  

If $m\leq L$ we are done, so we assume $m > L$. In this case, it must hold that $\tilde z = m$.
Moreover, there must exist $\tilde L \in (L, m)$ such that 
\(\sigma(\tilde L) < \Lambda(\tilde L). \)
By the ``Increasing lower bound" property, the tie point in $[\tilde L, M]$ is $m$; it cannot be strictly smaller since $\sigma$ is not concave on $[m-\delta, M]$. 
This shows that $\conc(\sigma, [\tilde L, M])$ cannot be an affine function: if it were, it would have to be $\Lambda$ as $[m,M]$ has nonempty interior, but we know $\Lambda$ is not tight at $\tilde L$ and the envelopes are always tight on the endpoints.

To conclude, we consider $[\tilde L, M+\varepsilon]$. By the same arguments as above, $\conc(\sigma, [\tilde L, M+\varepsilon])$ must be an affine function in $[\tilde L, M]$ which is tight in $\tilde L$ and in some point in $[M, M+\varepsilon]$. Letting $\varepsilon\to 0$, we obtain an affine overestimator of $\sigma$ on $[\tilde L, M]$ which is tight at the endpoints. This is a contradiction.

\end{document}